\newtheorem{theorem}{Theorem}[section]
\newtheorem{definition}[theorem]{Definition}
\newtheorem{lemma}[theorem]{Lemma}
\newtheorem{proposition}[theorem]{Proposition}
\newtheorem{remark}[theorem]{Remark}
\newenvironment{proof}[1][Proof]{\textbf{#1.} }{\hfill\rule{0.5em}{0.5em}}
{\catcode`\@=11\global\let\AddToReset=\@addtoreset
\AddToReset{equation}{section}

\AddToReset{theorem}{section}

\def\nc{\newcommand}

 \def\Om{\Omega}

\nc\pa{\partial}

\nc\CC{\mathbb{C}}
\nc\RR{\mathbb{R}}
\nc\QQ{\mathbb{Q}}
\nc\ZZ{\mathbb{Z}}
\nc\NN{\mathbb{N}}

\begin{document}
\title{Global Lorentz gradient estimates for quasilinear equations with measure data for the strongly singular case: $1<p\leq \frac{3n-2}{2n-1}$}

\author[address1]{Le Cong Nhan}
\ead{nhanlc@hcmute.edu.vn}
\address[address1]{Faculty of Applied Sciences, HCMC University of Technology and Education, Ho Chi Minh City, Vietnam}

\author[myaddress1]{Le Xuan Truong} 
\ead{lxuantruong@gmail.com}
\address[myaddress1]{Department of Mathematics and Statistics, University of Economics Ho Chi Minh City, Vietnam}

\date{}  

\begin{abstract}
In this paper, we study the global regularity estimates in Lorentz spaces for gradients of solutions to quasilinear elliptic equations  with measure data of the form 
	 \begin{eqnarray*}
	\left\{ \begin{array}{rcl}
	-{\rm div}(\mathcal{A}(x, \nabla u))&=& \mu \quad \text{in} ~\Omega, \\
	u&=&0  \quad \text{on}~ \partial \Omega,
	\end{array}\right.
	\end{eqnarray*}
where $\mu$ is a finite signed Radon measure in $\Omega$, $\Om\subset\RR^n$ is a bounded domain such that its complement $\mathbb{R}^n\backslash\Omega$ is uniformly $p$-thick and $\mathcal{A}$ is a Carath\'eodory vector valued function satisfying growth and monotonicity conditions for the strongly singular case $1<p\leq \frac{3n-2}{2n-1}$. Our result extends the earlier results \cite{55Ph0,Tran19} to the strongly singular case $1<p\leq \frac{3n-2}{2n-1}$ and a recent result \cite{HP} by considering rough conditions on the domain $\Omega$ and the nonlinearity $\mathcal{A}$. 

\medskip 

\medskip

\medskip

\noindent MSC2010: primary: 35J60, 35J61, 35J62; secondary: 35J75, 42B37.

\medskip

\noindent Keywords: quasilinear equation; measure data; capacity.
\end{abstract}

\maketitle
\tableofcontents
									
 \section{Introduction and main results} 
 
In this paper we study the gradient regularity of solutions to the following quasilinear elliptic equations with measure data 
 \begin{eqnarray}\label{eq1.1}
 \left\{ \begin{array}{rcl}
 -{\rm div}(\mathcal{A}(x, \nabla u))&=& \mu \quad \text{in} ~\Omega, \\
 u&=&0  \quad \text{on}~ \partial \Omega,
 \end{array}\right.
 \end{eqnarray}
where $\Omega$ is a bounded open subset of $\mathbb{R}^{n}$, ($n\geq2$), and $\mu$ is a finite signed Radon measure in $\Omega$. The nonlinearity $\mathcal{A}:\mathbb{R}^n\times\mathbb{R}^n\to \mathbb{R}^n$ is a Carath\'eodory vector valued function and satisfies the following growth and monotonicity conditions:
	\begin{gather}
		\left|\mathcal{A}\left(x,\xi\right)\right| \leq \beta\left|\xi\right|^{p-1},\label{condi1}\\
		\langle\mathcal{A}\left(x,\xi\right) -\mathcal{A}\left(x,\eta\right),\xi-\eta\rangle\geq \alpha\left(\left|\xi\right|^2+\left|\eta\right|^2\right)^{(p-2)/2}\left|\xi-\eta\right|^2\label{condi2}		
	\end{gather}
for every $\left(\xi,\eta\right)\in \mathbb{R}^n\times\mathbb{R}^n\backslash\{\left(0,0\right)\}$ and a.e. $x\in \mathbb{R}^n$. Here $\alpha$ and $\beta$ are positive constant, and $p$ will be considered in the range
	\begin{align}\label{condi3}
		1<p\leq \frac{3n-2}{2n-1}.
	\end{align}
As the regularity of boundary of $\Omega$ is concerned, we assume a capacity density condition on $\Omega$ which is known weaker than the Reifenberg flatness condition. More precisely, by {\it a capacity density condition on $\Omega$} we mean the complement $\mathbb{R}^n\backslash\Omega$ is {\it uniformly $p$-thick}, that is, there exist constants $c_0,r_0>0$ such that for all $0<t\leq r_0$ and all $x\in \mathbb{R}^n\backslash\Omega$ there holds
	\begin{align}\label{condi4}
		\mathrm{cap}_p\left(\overline{B_t(x)}\cap \left(\mathbb{R}^n\backslash\Omega\right),B_{2t}\left(x\right)\right) \geq c_0\, \mathrm{cap}_p\left(\overline{B_t(x)},B_{2t}\left(x\right)\right).
	\end{align}
Here for a compact set $K\subset B_{2t}\left(x\right)$ we define the $p$-capacity of $K$, $\mathrm{cap}\left(K,B_{2t}(x)\right)$  by
	\begin{align*}
		\mathrm{cap}_p\left(K,B_{2t}(x)\right) =\inf\left\{\int_{\Omega}\left|\nabla \varphi\right|^pdx: \varphi\in C_0^\infty\left(B_{2t}(x)\right)\,\,\text{and}\,\,\varphi\geq \chi_{K}\right\},
	\end{align*}
where $\chi_{K}$ is the characteristic function of $K$.	It is noticed that the domain satisfying \eqref{condi4} includes Lipschitz domains or domain satisfying a uniform exterior corkscrew condition which means that there exist constants $c_0, r_0>0$ such that for all $0<t\leq r_0$ and
all $x\in\mathbb{R}^n\backslash\Omega$, there is $y\in B_t(x)$ such that $B_{t/c_0}(y)\subset \mathbb{R}^n\backslash\Omega$.

\medskip
Under these conditions, our main goal in this paper is to establish the following global gradient estimate in Lorentz spaces
	\begin{align}\label{maingoal}
		\left\|\nabla u \right\|_{L^{s,t}\left(\Omega\right)} \leq C\left\|\mathcal{M}_1 \left(\left|\mu\right|\right)^{1/(p-1)}\right\|_{L^{s,t}\left(\Omega\right)}
	\end{align}
where $s$ lies below or near the natural exponent $p$, i.e., $s<p+\varepsilon$ for some small $\varepsilon$ depending on $n,p,\alpha,\beta$, and $\Omega$, and $t\in\left(0,\infty\right]$. Here $\mathcal{M}_1$ is the fractional maximal function defined for each nonnegative locally
finite measure $\mu$ in $\mathbb{R}^n$ by
	\begin{align*}
		\mathcal{M}_1\left(\mu\right)(x) = \sup\limits_{\rho>0}\frac{\mu\left(B_\rho(x)\right)}{\rho^{n-1}},\quad x\in\mathbb{R}^n.
	\end{align*}
And the Lorentz spaces $L^{s,t}\left(\Omega\right)$, with $1<s<\infty$, and $0<t\leq \infty$, is the set of measurable functions $f$ on $\Omega$ such that
	\begin{align*}
		\left\|f\right\|_{L^{s,t}\left(\Omega\right)} = \left[s\int_{0}^{\infty}\left(\lambda^s\left|\left\{x\in\Omega: \left|f(x)\right|>\lambda\right\}\right|\right)^{{t/s}}\frac{d\lambda}{\lambda}\right]^{1/t}<\infty
	\end{align*}
if $t\neq\infty$. It is also noticed that if $s=t$ then the Lorentz space $L^{s,s}\left(\Omega\right)$ is the usual Lebesgue space $L^s\left(\Omega\right)$. 

\medskip
If $t=\infty$ the space $L^{s,\infty}\left(\Omega\right)$ is the weak $L^s$ or Marcinkiewicz space with quasinorm
	\begin{align*}
		\left\|f\right\| = \sup\limits_{\lambda>0}\lambda\left|\left\{x\in\Omega:\left|f(x)\right|>\lambda\right\}\right|^{1/s}.
	\end{align*}
For $1<r<s<\infty$ then one has
	\begin{align*}
		L^s\left(\Omega\right)\subset L^{s,\infty}\left(\Omega\right)\subset L^r\left(\Omega\right).
	\end{align*}
It is worth mentioning that the local version of \eqref{maingoal} was first obtained by G. Mingione in \cite{55Mi0} for the regular case $2\leq p\leq n$ and then extended several authors in recent. For example, Nguyen Cong Phuc \cite{55Ph0} obtained the global Lorentz estimate for solutions of \eqref{eq1.1} in the `possibly singular' case $2-{1}/{n}<p\leq n$ by using a capacitary density condition on $\Omega$ and the assumptions \eqref{condi1}-\eqref{condi2}. Therein, the author proved the $L^{s,t}(\Omega)$ estimates of solution for all $0<s<p+\varepsilon, 0 < t \leq \infty$ for some $\varepsilon>0$. Afterward a similar result is obtained by M. P. Tran \cite{Tran19} to the singular case $\frac{3n-2}{2n-1}<p\leq 2-\frac{1}{n}$ with the rough conditions on the domain $\Omega$ and the nonlinearity $\mathcal{A}$ in which the author exploits some comparison estimates in \cite{QH4}. In \cite{QH4} by using the good-$\lambda$ type inequality Q-H. Nguyen and Nguyen Cong Phuc proved a global gradient estimates in the weighted Lorentz space for solution to \eqref{eq1.1} in the singular case $\frac{3n-2}{2n-1}<p\leq 2-\frac{1}{n}$ when the nonlinearity $\mathcal{A}$ satisfies the small BMO condition in the $x$-variable and the domain $\Omega$ satisfies the so-called Reifenberg flatness condition. More precisely the authors showed the $L^{s,t}(\Omega)$ estimates of solution for all $0<s<\infty, 0 < t \leq  \infty$. And in a very recent result \cite{HP} under similar conditions on $\mathcal{A}$ and the regularity of $\Omega$, the authors proved a weighted Calder\'on-Zygmund type inequality for $1<p\leq \frac{3n-2}{2n-1}$.

\medskip
Our aim in this paper is to extend these results. By considering the remaining `strongly singular' case $1<p\leq \frac{3n-2}{2n-1}$ and without the hypothesis of Reifenberg flat domain on $\Omega$ and small BMO semi-norms of $\mathcal{A}$, we show that the estimate \eqref{maingoal} holds for all $2-p<s<p+\varepsilon$ and $0<t\leq \infty$ (see Theorem \ref{theo_1.2}). 
        
\medskip
To state our main result, we need some preliminary results on $p$-capacity, a decomposition of measure $\mu$ and the definition of renormalized solution which is can be found in \cite{11DMOP}.

\medskip
For $\mu\in\mathfrak{M}_b(\Omega)$ (the set of finite signed measures in $\Omega$), we will tacitly extend it by zero to $\Omega^c:=\mathbb{R}^n\setminus\Omega$. We let $\mu^+$, $\mu^-$, and $\left|\mu\right|$ be the positive part, negative part, and the total variation of a measure $\mu\in\mathfrak{M}_b(\Omega)$ respectively. Let us also recall that a sequence  $\{\mu_{k}\} \subset
\mathfrak{M}_{b}(\Omega)$ converges to $\mu \in
\mathfrak{M}_{b}(\Omega)$ in the narrow topology of measures if
$$
\lim_{k\rightarrow\infty}\int_{\Omega}\varphi \, d\mu_{k}=\int_{\Omega}\varphi \,d\mu,
$$
for every bounded and continuous function $\varphi$ on $\Omega$. 

\medskip
We denote by $\mathfrak{M}_0(\Omega)$ the set of all measures $\mu\in \mathfrak{M}_b\left(\Omega\right)$ which are absolutely continuous with respect to the $p$-capacity, i.e. which satisfy $\mu\left(B\right)=0$ for every Borel set $B\subset\subset \Omega$ such that $\mathrm{cap}_p\left(B,\Omega\right)=0$. We also denote by $\mathfrak{M}_s(\Omega)$ the set of all measures $\mu\in \mathfrak{M}\left(\Omega\right)$ which are singular with respect to the $p$-capacity. 

\medskip
It is known that any $\mu\in\mathfrak{M}_b(\Omega)$ can be written  uniquely  in the form $\mu=\mu_0+\mu_s$ where $\mu_0\in \mathfrak{M}_0(\Omega)$ and $\mu_s\in \mathfrak{M}_s(\Omega)$ (see \cite{11DMOP}). It is also known  that any  $\mu_0\in \mathfrak{M}_0(\Omega)$ can be written in the form $\mu_0=f-{\rm div}( F)$ where $f\in L^1(\Omega)$ and $F\in L^{\frac{p}{p-1}}(\Omega,\mathbb{R}^n)$.
 
\medskip
To define renormalized solutions, we need some following tools. For $k>0$, we define the usual  two-sided truncation operator $T_k$ by
	$$
	T_k(s)=\max\{\min\{s,k\},-k\}, \qquad s\in\mathbb{R}.
	$$ 
	
For our purpose, the following notion of gradient is needed. If $u$ is a measurable function defined  in $\Omega$, finite a.e., such that $T_k(u)\in W^{1,p}_{loc}(\Omega)$ for any $k>0$, then there exists a measurable function $v:\Omega\to \mathbb{R}^n$ such that $\nabla T_k(u)= v \chi_{\{|u|< k\}}$ a.e. in $\Omega$  for all $k>0$ (see \cite[Lemma 2.1]{bebo}). In this case, we define the gradient $\nabla u$ of $u$ by $\nabla u:=v$. It is known that  $v\in L^1_{loc}(\Omega, \mathbb{R}^n)$ if and only if  $u\in W^{1,1}_{loc}(\Omega)$ and then $v$ is the usual weak gradient of $u$. On the other hand, for $1<p\leq 2-\frac{1}{n}$, by looking at the fundamental solution we see that in general distributional solutions of \eqref{eq1.1} may not even belong to $u\in W^{1,1}_{loc}(\Omega)$.

\medskip  
We now define the renormalized solutions to \eqref{eq1.1} where the right-hand side is assumed to be in $L^1(\Omega)$ or in $\mathfrak{M}_{0}(\Omega)$ (see \cite{11DMOP} for the definition and some other equivalent definitions of renormalized solutions).

\begin{definition} \label{derenormalized} 
Let $\mu=\mu_0+\mu_s\in\mathfrak{M}_b(\Omega)$, with $\mu_0\in \mathfrak{M}_0(\Omega)$ and $\mu_s\in \mathfrak{M}_s(\Omega)$. A measurable  function $u$ defined in $\Omega$ and finite a.e. is called a renormalized solution of \eqref{eq1.1} if $T_k(u)\in W^{1,p}_0(\Omega)$ for any $k>0$, $|{\nabla u}|^{p-1}\in L^r(\Omega)$ for any $0<r<\frac{n}{n-1}$, and $u$ has the following additional property. For any $k>0$ there exist  nonnegative Radon measures $\lambda_k^+, \lambda_k^- \in\mathfrak{M}_0(\Omega)$ concentrated on the sets $\{u=k\}$ and $\{u=-k\}$, respectively, such that $\mu_k^+\rightarrow\mu_s^+$, $\mu_k^-\rightarrow\mu_s^-$ in the narrow topology of measures and  that
  	\[
  	\int_{\{|u|<k\}}\langle A(x,\nabla u),\nabla \varphi\rangle
  	dx=\int_{\{|u|<k\}}{\varphi d}{\mu_{0}}+\int_{\Omega}\varphi d\lambda_{k}%
  	^{+}-\int_{\Omega}\varphi d\lambda_{k}^{-},
  	\]
for every $\varphi\in W^{1,p}_0(\Omega)\cap L^{\infty}(\Omega)$.
\end{definition}

\begin{remark}
	It is known that if $\mu\in \mathfrak{M}_0(\Omega)$ then there is one and only one renormalized solution of \eqref{eq1.1} (see \cite{BGO,11DMOP}). However, to the best of our knowledge, for a general $\mu\in \mathfrak{M}_b(\Omega)$ the uniqueness of renormalized solutions of \eqref{eq1.1} is still an open problem.	
\end{remark}

\begin{remark}
	By \cite[Lemma 4.1]{11DMOP} we have
		\begin{align*}
			\left\|\nabla u\right\|_{L^{\frac{(p-1)n}{n-1},\infty}\left(\Omega\right)} \leq C\left[\left|\mu\right|\left(\Omega\right)\right]^\frac{1}{p-1}
		\end{align*}
	which implies that 
		\begin{align*}
			\left(\frac{1}{R^n}\int_{\Omega}\left|\nabla u\right|^{\gamma_1}\right)^{1/\gamma_1} \leq C_{\gamma_1}\left[\frac{\left|\mu\right|\left(\Omega\right)}{R^{n-1}}\right]^{1/(p-1)}
		\end{align*}	
	for any $0<\gamma_1<\frac{(p-1)n}{n-1}$, where $R=\mathrm{diam}\left(\Omega\right)$.	
\end{remark}

Let us also recall the Hardy-Littlewood maximal function $\mathcal{M}$ is defined for each locally integrable function $f$ in $\mathbb{R}^{n}$ by
\begin{equation*}
\mathcal{M}(f)(x)=\sup_{\rho>0}\fint_{B_\rho(x)}|f(y)|dy, \quad\forall x\in\mathbb{R}^{n}.
\end{equation*}

\begin{remark}
	In \cite{55Gra} the operator $\mathcal{M}$ is bounded from $L^s\left(\mathbb{R}\right)$ to $L^{s,\infty}\left(\mathbb{R}\right)$ for $s\geq 1$, that is,
	\begin{align*}
	\left|\left\{x\in\mathbb{R}^n:\mathcal{M}(f)>\lambda\right\}\right| \leq \frac{C}{\lambda^s}\int_{\mathbb{R}^n}\left|f\right|^sdx\quad\text{for all}\,\,\lambda>0.
	\end{align*}
\end{remark}

\begin{remark}
	In \cite{AH,55Gra} it allows us to present a boundedness property of maximal function $\mathcal{M}$ in the Lorentz space $L^{s,t}\left(\mathbb{R}^n\right)$ for $s > 1$ as follows:
	\begin{align*}
	\left\|\mathcal{M}(f)\right\|_{L^{s,t}\left(\mathbb{R}^n\right)} \leq C \left\|f\right\|_{L^{s,t}\left(\mathbb{R}^n\right)}.
	\end{align*}	
\end{remark}

We are now ready to state the main result of the paper.
\begin{theorem} \label{theo_1.2} 
 	Let $\mu\in \mathfrak{M}_b(\Omega)$ and $1<p\leq \frac{3n-2}{2n-1}$. There exists $\epsilon>0$ such that    for any $2-p<s<p+\epsilon$ and $t\in\left(0,\infty\right]$, then there exists a renormalized solution $u$ to \eqref{eq1.1} such that                           
     \begin{equation}\label{mainbound4}
         \|\nabla u\|_{L^{s,t}(\Omega)}\leq C \|[\mathcal{M}_1(\left|\mu\right|)]^{{1}/{p-1}}\|_{L^{s,t}(\Omega)}.
     \end{equation} 
    Here the constant $C$ depends only  on $n,p,\Lambda,q$, and $diam(\Omega)/r_0$.               
\end{theorem}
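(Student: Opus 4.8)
The plan is to follow the by-now standard route for Calderón--Zygmund--type estimates for measure-data quasilinear equations, adapted to the strongly singular range $1<p\le\frac{3n-2}{2n-1}$, and to combine it with a decomposition-and-approximation argument for the singular part of $\mu$. First I would dispose of the case $\mu\in\mathfrak{M}_0(\Omega)$, where a unique renormalized solution $u$ exists; the general case $\mu\in\mathfrak{M}_b(\Omega)$ is then recovered by writing $\mu=\mu_0+\mu_s$, approximating $\mu$ in the narrow topology by smooth (or $\mathfrak{M}_0$) measures $\mu_j$ with $|\mu_j|(\Omega)\le|\mu|(\Omega)$ and $\mathcal{M}_1(|\mu_j|)\lesssim \mathcal{M}_1(|\mu|)$ uniformly, solving \eqref{eq1.1} with data $\mu_j$, deriving the estimate \eqref{mainbound4} uniformly in $j$, and passing to the limit using the stability theory of renormalized solutions (as in \cite{11DMOP}) together with Fatou's lemma in the Lorentz quasinorm. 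So the heart of the matter is the a priori bound for a fixed nice $\mu$.

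For that a priori bound the key is a pointwise/level-set comparison estimate: for each ball $B_{2R}(x_0)$ one compares $\nabla u$ with $\nabla w$, where $w$ solves the homogeneous equation $-\mathrm{div}(\mathcal{A}(x,\nabla w))=0$ in $B_{2R}(x_0)$ with $w=u$ on $\partial B_{2R}(x_0)$ (the boundary version being used near $\partial\Omega$, where the uniform $p$-thickness \eqref{condi4} of $\mathbb{R}^n\setminus\Omega$ supplies the needed boundary reverse-Hölder/Lipschitz-type regularity of $w$ up to the boundary). In the strongly singular regime the correct estimate, as in \cite{QH4,HP}, is of the form
\[
\left(\fint_{B_R(x_0)}|\nabla u-\nabla w|^{\gamma_0}\,dx\right)^{1/\gamma_0}\le C\left[\frac{|\mu|(B_{2R}(x_0))}{R^{n-1}}\right]^{1/(p-1)}
\]
for a suitable small exponent $\gamma_0\in(0,1)$ forced by $p$ being close to $1$, while $w$ satisfies the self-improving bound $\|\nabla w\|_{L^\infty(B_{R/2})}\lesssim \left(\fint_{B_R}|\nabla w|^{\gamma_0}\right)^{1/\gamma_0}$. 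Combining these yields a good-$\lambda$ inequality: for $\lambda>0$ and $\varepsilon$ small,
\[
\left|\{M(|\nabla u|^{\gamma_0})>A^{\gamma_0}\lambda^{\gamma_0}\}\cap\{[\mathcal{M}_1(|\mu|)]^{\gamma_0/(p-1)}\le\varepsilon\lambda^{\gamma_0}\}\cap\Omega\right|\le C\varepsilon^{\sigma}\left|\{M(|\nabla u|^{\gamma_0})>\lambda^{\gamma_0}\}\cap\Omega\right|,
\]
with $A=A(n,p)>1$, via a Vitali/Calderón--Zygmund covering argument exploiting the density property of $\Omega$. Here $M$ denotes the Hardy--Littlewood maximal operator (Remarks on $\mathcal{M}$), whose weak-type and Lorentz boundedness are quoted above.

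Finally I would convert the good-$\lambda$ inequality into the Lorentz estimate. Multiplying by $\lambda^{s/\gamma_0}$ (note $s/\gamma_0>1$ since $s>2-p$ and $\gamma_0$ can be chosen $<s$), integrating $d\lambda/\lambda$ against the $t/s$-power in the definition of $\|\cdot\|_{L^{s,t}}$, and absorbing the term $\|M(|\nabla u|^{\gamma_0})^{1/\gamma_0}\|_{L^{s,t}}$ into the left side — which is legitimate because the Remark following Definition~\ref{derenormalized} gives the a priori finiteness $\|\nabla u\|_{L^{(p-1)n/(n-1),\infty}(\Omega)}\le C|\mu|(\Omega)^{1/(p-1)}$, hence $M(|\nabla u|^{\gamma_0})^{1/\gamma_0}\in L^{s,t}$ once $s<p+\varepsilon\le\frac{(p-1)n}{n-1}$ is checked — one obtains
\[
\|\nabla u\|_{L^{s,t}(\Omega)}\le C\,\|M(|\nabla u|^{\gamma_0})^{1/\gamma_0}\|_{L^{s,t}(\Omega)}\le C\,\|[\mathcal{M}_1(|\mu|)]^{1/(p-1)}\|_{L^{s,t}(\Omega)},
\]
using the boundedness of $M$ on $L^{s/\gamma_0,t/\gamma_0}$ in the last step. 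The main obstacle I anticipate is establishing the boundary comparison estimate and the gradient regularity of $w$ up to $\partial\Omega$ under only the capacity density condition \eqref{condi4} (rather than Reifenberg flatness) in this strongly singular range: one must run the reverse-Hölder argument of \cite{QH4,HP} with the correct small exponent $\gamma_0$ and verify that all the constants remain controlled by $n,p,\alpha,\beta$ and $\mathrm{diam}(\Omega)/r_0$, which is where the constraint $1<p\le\frac{3n-2}{2n-1}$ and the lower bound $s>2-p$ genuinely enter.
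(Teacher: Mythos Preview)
Your proposal has a genuine gap at the core comparison step, and this gap cascades through the rest of the argument.

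\textbf{The comparison estimate is misstated.} In the strongly singular range $1<p\le\frac{3n-2}{2n-1}$ the available comparison estimate (Lemmas~\ref{lem_2.2} and~\ref{lem_2.4}, taken from \cite{HP}) is
\[
\left(\fint_{B_{2R}}|\nabla(u-w)|^{\gamma_1}\,dx\right)^{1/\gamma_1}\le C\left(\frac{|\mu|(B_{2R})}{R^{n-1}}\right)^{1/(p-1)}+C\,\frac{|\mu|(B_{2R})}{R^{n-1}}\fint_{B_{2R}}|\nabla u|^{2-p}\,dx,
\]
not the clean one-term bound you wrote. The extra $\fint|\nabla u|^{2-p}$ term is exactly what distinguishes this range from $p>\frac{3n-2}{2n-1}$, and it cannot be dropped. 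Consequently the good-$\lambda$ inequality in the paper (Proposition~\ref{prop_6.2}) carries a \emph{third} level set $\{\mathcal{M}(\chi_\Omega|\nabla u|^{2-p})^{1/(2-p)}\le T\lambda\}$, and the resulting Lorentz-space summation (Lemma~\ref{lem_6.4} and \eqref{eq4.5}--\eqref{eq4.6}) produces a term $\|\mathcal{M}(|\nabla u|^{2-p})^{1/(2-p)}\|_{L^{s,t}}$ on the right. Absorbing this term uses the boundedness of $\mathcal{M}$ on $L^{s/(2-p),t}(\mathbb{R}^n)$, which requires $s/(2-p)>1$; \emph{this} is where the lower bound $s>2-p$ genuinely enters, not the place you indicated. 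Your two-term good-$\lambda$ would, if valid, yield the estimate for all small $s>0$, contradicting the stated range.

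\textbf{Two further issues.} First, under only the capacity density condition \eqref{condi4} and with no small-BMO assumption on $\mathcal{A}$, one does \emph{not} have $\|\nabla w\|_{L^\infty}\lesssim(\fint|\nabla w|^{\gamma_0})^{1/\gamma_0}$; only a reverse H\"older inequality up to $L^{p\theta_0}$ is available (Lemmas~\ref{lem_Gehring} and~\ref{lem_2.3}). This is precisely why the upper bound is $s<p+\epsilon$ with $\epsilon=p(\theta_0-1)$, rather than all $s<\infty$. Second, your finiteness justification ``$s<p+\varepsilon\le\frac{(p-1)n}{n-1}$'' is false: in this range $\frac{(p-1)n}{n-1}<1<p$, so the a priori Marcinkiewicz bound does not by itself place $\nabla u$ in $L^{s,t}$ for $s$ near $p$. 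The paper instead fixes a scale $\Lambda$ via \eqref{eq4.3} and absorbs a $T^{-1}\|\nabla u\|_{L^{s,t}}$ term by choosing $T$ large, with the $\Lambda$-contribution controlled through \eqref{eq4.8}.
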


For the proofs of the above theorem: we follow the approach developed by \cite{55Mi0,55Ph0} and use some new comparison estimates obtained recently by \cite{HP}, but technically our present is somewhat different form that of \cite{55Mi0,HP}. It is also possible to apply some results developed for quasilinear equations with given measure data, or linear/nonlinear potential and Calder\'on-Zygmund theories (see \cite{bebo, BW1, 11DMOP, 55DuzaMing, Duzamin2, 55MePh2, Mi2,55Mi0, 55QH2, 55Ph0, 55Ph2, 55Ph2-2}), to some new comparison estimates in the singular case $1<p\leq \frac{3n-2}{2n-1}$.	


\medskip
The paper is organized as follows. In Section \ref{sec-2} we present some important comparison estimates that are needed for the proof of main result and its applications are given in Sections \ref{sec-3}. In Section \ref{sec-4} we complete the proof of Theorem \ref{theo_1.2}

\section{Local interior and boundary comparison estimates}\label{sec-2}
Let $u\in W_{loc}^{1,p}(\Omega)$ be a solution of \eqref{eq1.1} and for each ball $B_{2R}=B_{2R}(x_0)\subset\subset\Omega$, we consider the unique solution $w\in u+W_{0}^{1,p}\left(B_{2R}\right)$ to the  equation 
    \begin{equation}
    \label{eq2.1}\left\{ \begin{array}{rcl}
    - \operatorname{div}\left( {\mathcal{A}(x,\nabla w)} \right) &=& 0 \quad \text{in} \quad B_{2R}, \\ 
    w &=& u\quad \text{on} \quad \partial B_{2R}.  
    \end{array} \right.
    \end{equation}
Then the following well-known version of Gehring's lemma holds for function $w$ defined above, see \cite[Theorem 6.7 and Remark 6.12]{Gi} and also \cite[Lemma 2.1]{55Ph0}.
	
	\begin{lemma}\label{lem_Gehring}
		Let $w$ be the solution to \eqref{eq2.1}. Then there exists a constant $\theta_0=\theta_0\left(n,p,\alpha,\beta\right)>1$ such that for any $t\in \left(0,p\right]$ and any balls $B_\rho(y)\subset B_{2R}(x_0)$ the following reverse H\"older type inequality holds 
			\begin{align*}
				\left(\fint_{B_{\rho/2}(y)}\left|\nabla w\right|^{p\theta_0}dx\right)^{1/p\theta_0} \leq C\left(\fint_{B_{\rho}(y)}\left|\nabla w\right|^{t}dx\right)^{t}
			\end{align*}
		with a constant $C=C\left(n,p,\alpha,\beta,t\right)$.	
	\end{lemma}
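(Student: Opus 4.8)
\textbf{Proof proposal for Lemma \ref{lem_Gehring}.}

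The plan is to obtain the claimed reverse H\"older inequality for $\nabla w$ as a direct consequence of a Caccioppoli-type estimate combined with the self-improving property of reverse H\"older inequalities (Gehring's lemma), exactly as formulated in \cite[Theorem 6.7, Remark 6.12]{Gi} and \cite[Lemma 2.1]{55Ph0}. First I would record that $w$ is an energy solution of the homogeneous equation $-\operatorname{div}(\mathcal{A}(x,\nabla w))=0$ in $B_{2R}$, so that the natural energy method applies on every subball. Fix a ball $B_\rho(y)\subset B_{2R}(x_0)$. Testing the weak formulation against $\varphi=\zeta^p(w-\bar{w})$, where $\zeta\in C_0^\infty(B_\rho(y))$ is a standard cutoff with $\zeta\equiv1$ on $B_{\rho/2}(y)$ and $|\nabla\zeta|\leq C/\rho$, and $\bar{w}$ is the average of $w$ over $B_\rho(y)$, and then using the growth bound \eqref{condi1} together with the monotonicity/ellipticity bound \eqref{condi2} (in the form $\langle\mathcal{A}(x,\xi),\xi\rangle\gtrsim|\xi|^p$, which follows from \eqref{condi2} with $\eta=0$), I would derive the Caccioppoli inequality
\begin{align*}
\fint_{B_{\rho/2}(y)}|\nabla w|^p\,dx \leq \frac{C}{\rho^p}\fint_{B_\rho(y)}|w-\bar{w}|^p\,dx,
\end{align*}
with $C=C(n,p,\alpha,\beta)$. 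Here absorbing the term coming from Young's inequality into the left-hand side is routine.

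Next I would apply the Sobolev--Poincar\'e inequality on $B_\rho(y)$ to the right-hand side. With $p^\sharp=\frac{np}{n+p}<p$ (the Sobolev conjugate exponent below $p$), one has
\begin{align*}
\left(\fint_{B_\rho(y)}|w-\bar{w}|^p\,dx\right)^{1/p} \leq C\rho\left(\fint_{B_\rho(y)}|\nabla w|^{p^\sharp}\,dx\right)^{1/p^\sharp},
\end{align*}
so that, after cancelling the factor $\rho$, the Caccioppoli inequality becomes a genuine reverse H\"older inequality
\begin{align*}
\left(\fint_{B_{\rho/2}(y)}|\nabla w|^p\,dx\right)^{1/p} \leq C\left(\fint_{B_\rho(y)}|\nabla w|^{p^\sharp}\,dx\right)^{1/p^\sharp}
\end{align*}
valid on all such pairs of concentric balls contained in $B_{2R}(x_0)$, with exponent $p^\sharp<p$ on the right. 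At this stage Gehring's lemma (in the precise quantitative form of \cite[Theorem 6.7]{Gi}, whose statement is local and applies to functions satisfying a reverse H\"older inequality on all subballs of a fixed ball) upgrades this to the existence of some $\theta_0=\theta_0(n,p,\alpha,\beta)>1$ and a constant $C=C(n,p,\alpha,\beta)$ such that
\begin{align*}
\left(\fint_{B_{\rho/2}(y)}|\nabla w|^{p\theta_0}\,dx\right)^{1/(p\theta_0)} \leq C\left(\fint_{B_\rho(y)}|\nabla w|^{p^\sharp}\,dx\right)^{1/p^\sharp}.
\end{align*}

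Finally I would pass from the exponent $p^\sharp$ on the right to an arbitrary $t\in(0,p]$. For $t\in[p^\sharp,p]$ this is immediate by H\"older's inequality on $B_\rho(y)$. For the remaining very small exponents $t\in(0,p^\sharp)$ one invokes the standard self-improvement of the right-hand integrability in reverse H\"older inequalities --- the ``lowering the exponent'' trick, essentially an interpolation-plus-iteration argument (see \cite[Remark 6.12]{Gi} or the corresponding step in \cite[Lemma 2.1]{55Ph0}) --- which shows that the reverse H\"older inequality with left exponent $p\theta_0$ and right exponent $p^\sharp$ on all subballs self-improves to allow any positive exponent $t$ on the right, at the cost of a constant depending additionally on $t$. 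Combining, one obtains the stated inequality with $C=C(n,p,\alpha,\beta,t)$, completing the proof. The main technical point to be careful about is the dependence of all constants only on $n,p,\alpha,\beta$ (and $t$) and uniformity over the balls $B_\rho(y)\subset B_{2R}(x_0)$, which is exactly what the local formulation of Gehring's lemma guarantees; the singular range \eqref{condi3} plays no special role here since the argument only uses the $p$-growth structure of $\mathcal{A}$, not the size of $p$. I should also note that, as written, the exponent on the outer right-hand side bracket in the statement reads $t$ rather than $1/t$; I would state and prove the inequality with the correct normalizing exponent $1/t$, i.e. with $\left(\fint_{B_\rho(y)}|\nabla w|^t\right)^{1/t}$ on the right.
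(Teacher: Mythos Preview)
Your proposal is correct and follows exactly the approach the paper indicates: the paper does not give its own proof of this lemma but simply refers to \cite[Theorem 6.7 and Remark 6.12]{Gi} and \cite[Lemma 2.1]{55Ph0}, which encode precisely the Caccioppoli + Sobolev--Poincar\'e + Gehring self-improvement + exponent-lowering argument you outline. Your observation about the typo (the right-hand exponent should be $1/t$ rather than $t$) is also well taken.
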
	
	
The next comparison estimate gives an estimate for the difference $\nabla u-\nabla w$ in terms of the total variation of $\mu$ in $B_{2R}$ and the norm of $\nabla u$ in $L^{2-p}(B_{2R})$ in the "strongly singular" case $1<p\leq \frac{3n-2}{2n-1}$. This result was proved by Q-H. Nguyen \cite[Lemma 2.1]{HP}. Similar estimates for the other case, i.e., $p>\frac{3n-2}{2n-1}$ was given in \cite{Mi2,55DuzaMing,Duzamin2,QH4}. 
		
    \begin{lemma}\label{lem_2.2}
    	Let $u$ and $w$ be solution to \eqref{eq1.1} and \eqref{eq2.1} respectively and assume that   $1<p\leq \frac{3n-2}{2n-1}$. Then
    	\begin{align*}
    	\left(\fint_{B_{2R}}|\nabla (u-w)|^{\gamma_1}dx\right)^{{1}/{\gamma_1}} \leq C\left(\frac{|\mu|(B_{2R})}{R^{n-1}}\right)^{{1}/{(p-1)}}+\frac{|\mu|(B_{2R})}{R^{n-1}}\fint_{B_{2R}}|\nabla u|^{2-p}dx,
    	\end{align*}
    	for any $0<\gamma_1<\frac{n(p-1)}{n-1}$.
    \end{lemma}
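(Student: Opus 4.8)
The plan is to pass from the two equations to a weighted energy estimate for the truncated difference $T_k(u-w)$, convert it by H\"older's inequality into the desired bound on the sublevel sets of $|u-w|$, treat the complementary superlevel set by the Marcinkiewicz estimates available for measure-data problems, and finally choose the truncation level $k$ optimally. Write $v:=u-w$ and $G:=|\nabla u|+|\nabla w|$, so that $G\ge|\nabla v|$, and note that $v\in W^{1,p}_0(B_{2R})$ and $T_k(v)\in W^{1,p}_0(B_{2R})\cap L^\infty(B_{2R})$ with $\nabla T_k(v)=\nabla v\,\chi_{\{|v|<k\}}$ for every $k>0$; by scaling we may take $R=1$ and restore the powers of $R$ at the end. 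Following the standard renormalized-solution argument (sending the internal truncation level $m\to\infty$ in Definition~\ref{derenormalized} and using that $\lambda_m^{\pm}$ concentrate on $\{|u|=m\}$, cf.\ \cite{11DMOP}), testing the difference of \eqref{eq1.1} and \eqref{eq2.1} with $T_k(v)$ gives
\[
\int_{B_{2R}}\langle\mathcal{A}(x,\nabla u)-\mathcal{A}(x,\nabla w),\nabla T_k(v)\rangle\,dx\;\le\;k\,|\mu|(B_{2R}),
\]
and then the monotonicity condition \eqref{condi2}, together with $\nabla T_k(v)=\nabla v\,\chi_{\{|v|<k\}}$ and the elementary inequality $(|\nabla u|^2+|\nabla w|^2)^{(p-2)/2}\ge c(p)\,G^{p-2}$ valid for $1<p<2$, yields the weighted energy estimate
\[
\int_{B_{2R}\cap\{|v|<k\}}G^{p-2}\,|\nabla v|^2\,dx\;\le\;C\,k\,|\mu|(B_{2R}).
\]

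Fix $0<\gamma_1<\frac{n(p-1)}{n-1}$ (so in particular $\gamma_1<1$ in the range \eqref{condi3}) and set $m:=\frac{\gamma_1(2-p)}{2-\gamma_1}$, which satisfies $m<2-p$ (as $\gamma_1<1$) and $m<\frac{n(p-1)}{n-1}$ (as $p\le n$). On $\{|v|<k\}$, writing $|\nabla v|^{\gamma_1}=(G^{p-2}|\nabla v|^2)^{\gamma_1/2}\,G^{\gamma_1(2-p)/2}$ and applying H\"older with exponents $2/\gamma_1$ and $2/(2-\gamma_1)$ gives
\[
\int_{B_{2R}\cap\{|v|<k\}}|\nabla v|^{\gamma_1}\,dx\;\le\;\Big(\int_{B_{2R}\cap\{|v|<k\}}G^{p-2}|\nabla v|^2\,dx\Big)^{\gamma_1/2}\Big(\int_{B_{2R}}G^{m}\,dx\Big)^{(2-\gamma_1)/2}.
\]
On the complementary set I combine Chebyshev's inequality with the Marcinkiewicz bounds $\|\nabla v\|_{L^{\frac{n(p-1)}{n-1},\infty}(B_{2R})}\le C|\mu|(B_{2R})^{1/(p-1)}$ and $|\{|v|\ge k\}|\le C\big(|\mu|(B_{2R})\,k^{1-p}\big)^{n/(n-p)}$ (legitimate since $1<p<2\le n$, $v\in W^{1,p}_0(B_{2R})$, and $v$ solves a measure-data equation on $B_{2R}$ with datum controlled by $|\mu|(B_{2R})$) to dominate $\int_{\{|v|\ge k\}}|\nabla v|^{\gamma_1}\,dx$.

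It remains to estimate $\int_{B_{2R}}G^m$ and to optimize in $k$. Bounding $G^m\le C(|\nabla u|^m+|\nabla v|^m)$, the $\nabla v$-part is controlled by $CR^n\big(|\mu|(B_{2R})/R^{n-1}\big)^{m/(p-1)}$ via the Marcinkiewicz bound, whereas the $\nabla u$-part \emph{cannot} be absorbed into this natural scale — the a priori estimate recalled after Definition~\ref{derenormalized} is available only below the exponent $\frac{n(p-1)}{n-1}$, and in the strongly singular range \eqref{condi3} one has $2-p\ge\frac{n(p-1)}{n-1}$ — so it is merely recorded, by H\"older in the measure (using $m<2-p$), as
\[
\int_{B_{2R}}|\nabla u|^m\,dx\;\le\;CR^n\Big(\fint_{B_{2R}}|\nabla u|^{2-p}\,dx\Big)^{m/(2-p)}.
\]
Substituting these bounds and the weighted energy estimate above into the H\"older splitting, then adding the superlevel-set contribution and choosing $k$ to balance the two pieces (this uses the identity $m(2-\gamma_1)=\gamma_1(2-p)$ to make the exponents match), yields after rescaling
\[
\Big(\fint_{B_{2R}}|\nabla(u-w)|^{\gamma_1}\,dx\Big)^{1/\gamma_1}\;\le\;C\Big(\frac{|\mu|(B_{2R})}{R^{n-1}}\Big)^{1/(p-1)}+\frac{|\mu|(B_{2R})}{R^{n-1}}\fint_{B_{2R}}|\nabla u|^{2-p}\,dx.
\]

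The main obstacle is this last step: because $\nabla u$ carries only the low integrability $|\nabla u|^{p-1}\in L^{r}$, $r<\frac{n}{n-1}$, the integral $\int_{B_{2R}}G^m$ cannot be reduced to the natural scale $\big(|\mu|(B_{2R})/R^{n-1}\big)^{m/(p-1)}$, and controlling its $\nabla u$-part while preserving the correct $R$-homogeneity is precisely what forces the second, genuinely new, term $\frac{|\mu|(B_{2R})}{R^{n-1}}\fint_{B_{2R}}|\nabla u|^{2-p}\,dx$ — a quantity that has the right order exactly in the strongly singular regime \eqref{condi3}, marking the threshold $p=\frac{3n-2}{2n-1}$. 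A secondary, purely technical, point is the rigorous justification of the testing in the first step within the renormalized framework, which is carried out along the lines of \cite{11DMOP,HP}.
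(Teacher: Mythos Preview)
The paper does not prove this lemma; it cites it as \cite[Lemma 2.1]{HP}. So the comparison is between your argument and the one in \cite{HP}.

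Your first two steps (the truncated energy estimate and the H\"older splitting on $\{|v|<k\}$ with $m=\gamma_1(2-p)/(2-\gamma_1)$) are exactly right, and the observation that $\fint|\nabla u|^{2-p}$ must be carried along because $2-p\ge\frac{n(p-1)}{n-1}$ in the range \eqref{condi3} is the correct intuition for why the second term appears.

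There is, however, a genuine gap in your treatment of the superlevel set $\{|v|\ge k\}$. You invoke
\[
\|\nabla v\|_{L^{\frac{n(p-1)}{n-1},\infty}(B_{2R})}\le C\,|\mu|(B_{2R})^{1/(p-1)}
\qquad\text{and}\qquad
|\{|v|\ge k\}|\le C\big(|\mu|(B_{2R})\,k^{1-p}\big)^{n/(n-p)},
\]
justifying them by saying that ``$v$ solves a measure-data equation on $B_{2R}$ with datum controlled by $|\mu|(B_{2R})$''. This is not true in the sense you need: the equation solved by $v$ is $-\mathrm{div}\big(\mathcal{A}(x,\nabla w+\nabla v)-\mathcal{A}(x,\nabla w)\big)=\mu$, and the shifted operator $\xi\mapsto\mathcal{A}(x,\xi+\nabla w(x))-\mathcal{A}(x,\nabla w(x))$ does \emph{not} satisfy the uniform $p$-growth and $p$-monotonicity conditions \eqref{condi1}--\eqref{condi2}; its ellipticity degenerates with $|\nabla w|$. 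In fact, the first bound above cannot hold in general: if it did, H\"older's inequality would immediately give $(\fint_{B_{2R}}|\nabla v|^{\gamma_1})^{1/\gamma_1}\le C\big(|\mu|(B_{2R})/R^{n-1}\big)^{1/(p-1)}$ for every $\gamma_1<\frac{n(p-1)}{n-1}$, i.e.\ the lemma \emph{without} its second term --- contradicting the very point of the strongly singular regime.

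In the argument of \cite{HP} the size of $\{|v|\ge k\}$ is instead controlled by combining the \emph{only} available information --- the weighted energy estimate --- with Sobolev's inequality for $T_k(v)$. Concretely, H\"older gives
\[
\int_{\{|v|<k\}}|\nabla v|\;\le\;(CkM)^{1/2}\Big(\int_{B_{2R}}G^{2-p}\Big)^{1/2},
\]
and then $W^{1,1}_0\hookrightarrow L^{n/(n-1)}$ yields a level-set decay for $v$ that unavoidably carries the factor $\int_{B_{2R}}G^{2-p}$. It is this factor --- not the sublevel-set part --- that, after bounding $\int G^{2-p}$ by $\int|\nabla u|^{2-p}+\int|\nabla v|^{2-p}$, summing over dyadic levels and optimising, produces the second term $\frac{|\mu|(B_{2R})}{R^{n-1}}\fint|\nabla u|^{2-p}$. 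Your sketch places the origin of that term entirely in the sublevel contribution and disposes of the superlevel contribution ``for free''; that shortcut is where the proof breaks. (Two minor side remarks: your reason for $m<\frac{n(p-1)}{n-1}$ should be $m<\gamma_1$, which holds because $\gamma_1<1<p$, not ``$p\le n$''; and the final optimisation step, once the correct superlevel bound is in place, also needs a Young-type inequality to separate the mixed term $M^{\gamma_1 p/(2(p-1))}U^{\gamma_1/2}$ into $M^{\gamma_1/(p-1)}+(MU)^{\gamma_1}$.)
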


\medskip
Lemmas \ref{lem_Gehring} and \ref{lem_2.2} can be extended  up to  boundary. As $\mathbb{R}^n\backslash\Omega$ is uniformly $p$-thick with constants $c_0, r_0>0$, there exists $1<p_0=
p_0(n, p, c_0)<p$ such that $\mathbb{R}^n\backslash\Omega$ is uniformly $p_0$-thick with constants $c_*=c(n, p, c_0)$
and $r_0$. This is by now a classical result due to Lewis \cite{Le88} (see also \cite{Mik}). Moreover, $p_0$ can be chosen near $p$ so that $p_0\in (np/(n+p), p)$. Thus, since $p_0<n$, we have
	\begin{align*}
		\mathrm{cap}_{p_0}\left(\overline{B_t(x)}\cap \left(\mathbb{R}^n\backslash\Omega\right),B_{2t}(x)\right) \geq c_*\mathrm{cap}_{p_0}\left(\overline{B_t(x)},B_{2t}(x)\right)
		\geq C\left(n,p,c_0\right)t^{n-p_0},
	\end{align*}
for all $0<t\leq r_0$ and for all $x\in\mathbb{R}^n\backslash\Omega$.

\medskip
Fix $x_0\in \partial\Omega$ and $0<R\leq r_0/10$. With $u\in W_0^{1,p}(\Omega)$ being a solution to \eqref{eq1.1}, we now consider the unique solution $w\in u+W_{0}^{1,p}\left(\Omega_{10R}(x_0)\right)$ to the following equation 
      \begin{equation}
      \label{eq2.3}\left\{ \begin{array}{rcl}
      - \operatorname{div}\left( {\mathcal{A}(x,\nabla w)} \right) &=& 0 \quad ~~~\text{in}\quad \Omega_{10R}(x_0), \\ 
      w &=& u\quad \quad \text{on} \quad \partial \Omega_{10R}(x_0),
      \end{array} \right.
      \end{equation}
where we define $\Omega_{10R}(x_0)=\Omega\cap B_{10R}(x_0)$ and extend $u$ by zero to $\mathbb{R}^n\backslash\Omega$ and $w$ by $u$ to $\mathbb{R}^n\backslash\Omega_{10R}(x_0)$. 

\medskip
Then we have the following version of Lemma \ref{lem_Gehring} up to the boundary, see \cite[Lemma 2.5]{55Ph0} for its proof.

\begin{lemma}\label{lem_2.3}
	Let $w$ be solution to \eqref{eq2.3}. Then there exists a constant $\theta_0=\theta_0\left(n,p,\alpha,\beta\right)>1$ such that for any $t\in \left(0,p\right]$ the following reverse H\"older type inequality 
	\begin{align*}
	\left(\fint_{B_{\rho/2}(y)}\left|\nabla w\right|^{p\theta_0}dx\right)^{1/p\theta_0} \leq C\left(\fint_{B_{3\rho}(y)}\left|\nabla w\right|^{t}dx\right)^{t}
	\end{align*}
	holds for any balls $B_{3\rho}(y)\subset B_{10R}(x_0)$  with a constant $C=C\left(n,p,\alpha,\beta,t\right)$.	
\end{lemma}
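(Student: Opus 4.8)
The plan is to deduce the inequality from a self-improving reverse H\"older inequality for $|\nabla w|$, obtained by pairing a Caccioppoli estimate with a Sobolev--Poincar\'e inequality --- the usual one on balls lying well inside $\Omega_{10R}(x_0)$, and a capacitary one on balls meeting $\partial\Omega$ --- and then invoking Gehring's lemma and finally lowering the exponent on the right-hand side.

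The first step will be to establish a reverse H\"older inequality with a gain. Given a ball $B_{3\rho}(y)\subset B_{10R}(x_0)$, I would distinguish two cases. If $B_{2\rho}(y)\subset\Omega_{10R}(x_0)$, then $w$ is $\mathcal{A}$-harmonic there, so testing the equation with $\eta^{p}\big(w-(w)_{B_{2\rho}(y)}\big)$ for a standard cutoff $\eta$ and using \eqref{condi1}, \eqref{condi2} (note $\mathcal{A}(x,0)=0$ by \eqref{condi1}) gives the usual Caccioppoli inequality, which combined with the Sobolev--Poincar\'e inequality on $B_{2\rho}(y)$ yields
\[
\left(\fint_{B_{\rho}(y)}|\nabla w|^{p}\,dx\right)^{1/p}\leq C\left(\fint_{B_{2\rho}(y)}|\nabla w|^{q}\,dx\right)^{1/q},\qquad q:=\max\Big\{\tfrac{np}{n+p},\,1\Big\}<p .
\]
If instead $B_{3\rho}(y)$ meets $\RR^{n}\setminus\Omega$, I would use that $w$ --- extended by $u$ outside $\Omega_{10R}(x_0)$, hence by zero outside $\Omega$ --- vanishes on a set of positive $p_{0}$-capacity inside a nearby ball, the lower bound being quantified by the self-improved uniform $p_{0}$-thickness recalled just before the lemma (Lewis's theorem), with $p_{0}\in(np/(n+p),p)$; the resulting capacitary Sobolev--Poincar\'e inequality, combined with the boundary Caccioppoli estimate obtained by testing with $\eta^{p}w$, gives a reverse H\"older inequality of the same shape, now with $q:=p_{0}<p$. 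The enlargement of the radius ratio from $2$ to $3$ in the statement is precisely what provides the room to pass between the truncated domain and full concentric balls, and in both cases the right-hand side carries no inhomogeneous term because \eqref{eq2.3} is homogeneous.

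The second step will be to apply the self-improving property of reverse H\"older inequalities --- in the form of \cite[Theorem 6.7 and Remark 6.12]{Gi} --- to $g=|\nabla w|^{q}$ with exponent $p/q>1$. This produces a constant $\theta_{0}=\theta_{0}(n,p,\alpha,\beta)>1$ with $|\nabla w|\in L^{p\theta_{0}}_{\mathrm{loc}}(B_{10R}(x_0))$ and, uniformly over $B_{3\rho}(y)\subset B_{10R}(x_0)$,
\[
\left(\fint_{B_{\rho/2}(y)}|\nabla w|^{p\theta_{0}}\,dx\right)^{1/(p\theta_{0})}\leq C\left(\fint_{B_{3\rho}(y)}|\nabla w|^{p}\,dx\right)^{1/p}.
\]
To finish, I would note that a reverse H\"older inequality for $|\nabla w|$ holds on every subball (the first step again), so a standard chaining argument combined with Young's inequality --- carried out as in \cite[Lemma 2.1]{55Ph0} --- replaces the $L^{p}$ average on the right by the $L^{t}$ average over a slightly larger ball for any $t\in(0,p]$, and a harmless relabeling of the radii then gives the assertion with $C=C(n,p,\alpha,\beta,t)$.

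The main obstacle will be the boundary case of the first step: producing a reverse H\"older inequality with a genuine gain $q<p$ on a ball straddling $\partial\Omega$. This is exactly where the capacity density condition \eqref{condi4} is used, and --- crucially --- where one must invoke its self-improvement to the \emph{sub-natural} exponent $p_{0}\in(np/(n+p),p)$: without it the capacitary Sobolev--Poincar\'e inequality would be available only at the natural exponent $p$, Gehring's lemma would give no gain, and the argument would not close. The interior case and the exponent-lowering are classical; and since the statement is quoted from \cite[Lemma 2.5]{55Ph0}, in practice I would simply follow that reference for the remaining details.
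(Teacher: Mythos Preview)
Your proposal is correct and matches the paper's own treatment: the paper gives no independent proof of this lemma but simply refers to \cite[Lemma 2.5]{55Ph0}, and the argument you sketch --- Caccioppoli plus (capacitary) Sobolev--Poincar\'e, Gehring's lemma via the self-improved $p_0$-thickness, then exponent lowering --- is precisely the content of that reference. There is nothing to add.
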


As a consequence, we have another version of reverse H\"older type inequality. 

  \begin{lemma}
	Let $w$ be solution to \eqref{eq2.3}. Then there exists a constant $\theta_0=\theta_0\left(n,p,\alpha,\beta\right)>1$ such that for any $t\in \left(0,p\right]$ and for $0<\sigma_1<\sigma_2<1$ it holds that
	\begin{align*}
	\left(\fint_{B_{\sigma_1}\rho(y)}\left|\nabla w\right|^{p\theta_0}dx\right)^{1/p\theta_0} \leq C\left(\fint_{B_{\sigma_2\rho}(y)}\left|\nabla w\right|^{t}dx\right)^{t}
	\end{align*}
	for any balls $B_{\rho}(y)\subset B_{10R}(x_0)$  with a constant $C=C\left(n,p,\alpha,\beta,t,\rho_1,\rho_2\right)$.	
\end{lemma}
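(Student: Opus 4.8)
The plan is to deduce the claim from Lemma \ref{lem_2.3} by a routine finite-covering argument, since Lemma \ref{lem_2.3} already supplies the reverse H\"older inequality on the particular pair of concentric balls $B_{\rho'/2}(y')\subset B_{3\rho'}(y')$, and here one only needs to transfer it to an arbitrary pair $B_{\sigma_1\rho}(y)\subset B_{\sigma_2\rho}(y)$ with $0<\sigma_1<\sigma_2<1$.

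First I would set $r:=\frac{(\sigma_2-\sigma_1)\rho}{3}$ and pick points $y_1,\dots,y_N\in\overline{B_{\sigma_1\rho}(y)}$ such that $\overline{B_{\sigma_1\rho}(y)}\subset\bigcup_{i=1}^N B_{r/2}(y_i)$; choosing the $y_i$ to be a maximal $\tfrac{r}{2}$-separated subset of $\overline{B_{\sigma_1\rho}(y)}$ makes the balls $B_{r/4}(y_i)$ pairwise disjoint, so that $N$, and the quantity $N\,|B_{r/2}(y_i)|/|B_{\sigma_1\rho}(y)|$, are bounded by a constant depending only on $n,\sigma_1,\sigma_2$. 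For each $i$, the triangle inequality gives $B_{3r}(y_i)\subset B_{\sigma_2\rho}(y)\subset B_{\rho}(y)\subset B_{10R}(x_0)$, so the hypotheses of Lemma \ref{lem_2.3} are met with its radius $\rho$ replaced by $r$.

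Next, for each $i$ Lemma \ref{lem_2.3} yields
\[
\left(\fint_{B_{r/2}(y_i)}|\nabla w|^{p\theta_0}dx\right)^{1/(p\theta_0)}\leq C\left(\fint_{B_{3r}(y_i)}|\nabla w|^{t}dx\right)^{1/t}\leq C\left(\frac{|B_{\sigma_2\rho}(y)|}{|B_{3r}(y_i)|}\right)^{1/t}\left(\fint_{B_{\sigma_2\rho}(y)}|\nabla w|^{t}dx\right)^{1/t},
\]
and since $|B_{\sigma_2\rho}(y)|/|B_{3r}(y_i)|=\big(\sigma_2/(\sigma_2-\sigma_1)\big)^n$ depends only on $n,\sigma_1,\sigma_2$, raising to the power $p\theta_0$, integrating over $B_{r/2}(y_i)$, summing over $i=1,\dots,N$, and using the covering property together with the bound on $N\,|B_{r/2}(y_i)|$ gives
\[
\fint_{B_{\sigma_1\rho}(y)}|\nabla w|^{p\theta_0}dx\leq\frac{1}{|B_{\sigma_1\rho}(y)|}\sum_{i=1}^N\int_{B_{r/2}(y_i)}|\nabla w|^{p\theta_0}dx\leq C\left(\fint_{B_{\sigma_2\rho}(y)}|\nabla w|^{t}dx\right)^{p\theta_0/t}.
\]
Taking the $(p\theta_0)$-th root then produces the asserted estimate (with the exponent $1/t$, rather than $t$, on the right-hand side — the printed statement evidently contains a typo), with $C=C(n,p,\alpha,\beta,t,\sigma_1,\sigma_2)$.

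I expect essentially no obstacle here: no new PDE input is used beyond Lemma \ref{lem_2.3}, and the only mild points requiring care are checking the two inclusions $B_{3r}(y_i)\subset B_{\sigma_2\rho}(y)$ and $B_{3r}(y_i)\subset B_{10R}(x_0)$ that license the application of Lemma \ref{lem_2.3}, and keeping track of how the number of covering balls — hence $C$ — depends on $\sigma_1$ and $\sigma_2$ (it degenerates like a negative power of $\sigma_2-\sigma_1$, which is harmless for the later uses of the lemma).
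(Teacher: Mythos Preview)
Your proposal is correct and follows essentially the same route as the paper: both proofs cover $B_{\sigma_1\rho}(y)$ by finitely many small balls of radius proportional to $(\sigma_2-\sigma_1)\rho$, apply Lemma~\ref{lem_2.3} on each, and sum, the only cosmetic difference being your choice $r=(\sigma_2-\sigma_1)\rho/3$ versus the paper's $(\sigma_2-\sigma_1)\rho/100$. Your observation that the right-hand exponent should read $1/t$ rather than $t$ is also correct.
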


\begin{proof}
	Let $x_1,x_2,...,x_m\in B_{\sigma_1}(0)$ be such that	
		\begin{align*}
			B_{\sigma_1}(0) \subset \bigcup\limits_{i=1}^m B_{\frac{\sigma_2-\sigma_1}{100}}(x_i)
		\end{align*}
	For $\rho>0$, let $B_{\rho}(y)\subset B_{10R}(x_0)$, then we find that
		\begin{align}\label{eq2.4}
		B_{\sigma_1\rho}(y) \subset \bigcup\limits_{i=1}^m B_{\frac{\left(\sigma_2-\sigma_1\right)\rho}{100}}(y+\rho x_i)
		\end{align}
	Also noticed that since $B_{\frac{6\left(\sigma_2-\sigma_1\right)\rho}{100}}(y+\rho x_i) \subset B_{\sigma_2\rho}(y)\subset B_{\rho}(y)$ for all $i=1,...,m$, by Lemma \ref{lem_2.3}, there exist $\theta_0=\theta_0\left(n,p,\alpha,\beta\right)$ and $C=C\left(n,p,\alpha,\beta,t\right)$ such that for each $t\in\left(0,p\right]$ it holds that, for all $i=1,..,m$
		\begin{align}\label{eq2.5}
			\left(\fint_{B_{\frac{\left(\sigma_2-\sigma_1\right)\rho}{100}}(y+\rho x_i)}\left|\nabla w\right|^{p\theta_0}dx\right)^{1/p\theta_0} \leq C\left(\fint_{B_{\frac{6\left(\sigma_2-\sigma_1\right)\rho}{100}}(y+\rho x_i)}\left|\nabla w\right|^{t}dx\right)^{t}.
		\end{align}
	From \eqref{eq2.4} and \eqref{eq2.5} for any $t\in\left(0,p\right]$ we have
		\begin{align*}
			\left(\fint_{B_{\sigma_1}\rho(y)}\left|\nabla w\right|^{p\theta_0}dx\right)^{1/p\theta_0} \leq& C\sum_{i=1}^{m}\left(\fint_{B_{\frac{\left(\sigma_2-\sigma_1\right)\rho}{100}}(y+\rho x_i)}\left|\nabla w\right|^{p\theta_0}dx\right)^{1/p\theta_0} \\
			\leq & C\sum_{i=1}^{m}\left(\fint_{B_{\frac{6\left(\sigma_2-\sigma_1\right)\rho}{100}}(y+\rho x_i)}\left|\nabla w\right|^{t}dx\right)^{t}\\
			\leq &C\left(\fint_{B_{{\sigma_2\rho}}(y)}\left|\nabla w\right|^{t}dx\right)^{t}.
		\end{align*}
	Thus the proof is complete.	
\end{proof}

\medskip
We also present here the counterpart of Lemma \ref{lem_2.2} up to the boundary, see \cite[Lemma 2.3]{HP}.
  
  \begin{lemma}\label{lem_2.4} Let $1<p\leq \frac{3n-2}{2n-1}$, and  let $u, w$ be solution to \eqref{eq1.1} and \eqref{eq2.3} respectively. Then we have
  	\begin{align*}\nonumber
  \left(	\fint_{B_{10R}(x_0)}|\nabla (u-w)|^{\gamma_1}dx\right)^{{1}/{\gamma_1}}&\leq C\left[\frac{|\mu|(B_{10R}(x_0))}{R^{n-1}}\right]^{{1}/{(p-1)}}\\
&\qquad +	C \frac{|\mu|(B_{10R}(x_0))}{R^{n-1}}\fint_{B_{10R}(x_0)}|\nabla u|^{2-p}dx,
  	\end{align*}
for any $0<\gamma_1<\frac{(p-1)n}{n-1}$.
  \end{lemma}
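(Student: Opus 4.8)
The plan is to reduce the boundary statement of Lemma \ref{lem_2.4} to the interior version (Lemma \ref{lem_2.2}) by the standard reflection/extension trick, exactly as Lemma \ref{lem_2.3} was deduced from Lemma \ref{lem_Gehring}. Since $u$ is extended by zero outside $\Omega$ and $w$ is extended by $u$ outside $\Omega_{10R}(x_0)$, the difference $u-w$ vanishes identically on $\mathbb{R}^n\setminus\Omega_{10R}(x_0)$; in particular $u-w\in W_0^{1,p}(\Omega_{10R}(x_0))$. The key structural fact is that $w$ solves $-\operatorname{div}(\mathcal{A}(x,\nabla w))=0$ in $\Omega_{10R}(x_0)$ with the same boundary data as $u$ on $\partial\Omega_{10R}(x_0)$, so that $u-w$ plays the role of the ``correction'' measured by $\mu$, just as in the interior case $B_{2R}\subset\subset\Omega$.

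First I would set up the comparison: testing the renormalized-solution formulation of \eqref{eq1.1} and the equation \eqref{eq2.3} against suitable truncations $T_k(u-w)$ (admissible since $u-w$ has zero trace on $\partial\Omega_{10R}(x_0)$ and $T_k$ preserves $W_0^{1,p}$), and exploiting the monotonicity bound \eqref{condi2}, one obtains a Caccioppoli-type inequality for $\nabla(u-w)$ controlled by $|\mu|(B_{10R}(x_0))$. The delicate point in the strongly singular range $1<p\le\frac{3n-2}{2n-1}$ is that $\nabla u$ need not lie in $L^1_{loc}$, so one works with the gradients $\nabla T_k(u-w)=v\chi_{\{|u-w|<k\}}$ in the sense recalled before Definition \ref{derenormalized}, estimates at the level of each truncation, and then passes to the limit using that $|\nabla u|^{p-1}\in L^r$ for all $r<n/(n-1)$. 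The exponent $\gamma_1<\frac{n(p-1)}{n-1}$ is precisely the integrability range available for $|\nabla u|$ (hence for $\nabla(u-w)$) from the a priori estimate in the second Remark after Definition \ref{derenormalized}, applied on the subdomain $\Omega_{10R}(x_0)$ rather than on all of $\Omega$; this is what forces the scaling factor $R^{n-1}$ and the term $\fint_{B_{10R}}|\nabla u|^{2-p}$ on the right-hand side.

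The second ingredient is the uniform $p_0$-thickness of $\mathbb{R}^n\setminus\Omega$ with $p_0<p$, as recalled in the paragraph before \eqref{eq2.3}. This yields a Sobolev--Poincaré inequality with the right constants on $\Omega_{10R}(x_0)$ for functions in $W_0^{1,p}(\Omega_{10R}(x_0))$ extended by zero—concretely, the measure-density lower bound $\mathrm{cap}_{p_0}(\overline{B_t(x)}\cap(\mathbb{R}^n\setminus\Omega),B_{2t}(x))\ge C t^{n-p_0}$ gives a capacitary Poincaré inequality that converts the $L^{p_0}$-norm of $u-w$ into the $L^{p_0}$-norm of $\nabla(u-w)$ with constant independent of $R$. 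Combining this with the Caccioppoli estimate and the De Giorgi--type iteration (or directly the $L^{\gamma_1,\infty}$ bound from \cite[Lemma 4.1]{11DMOP} applied to the equation satisfied by $u-w$, whose datum is $\mu$ restricted to $B_{10R}$ plus the boundary terms $\lambda_k^\pm$ which disappear for $k$ large on $\{|u-w|<k\}$) produces the stated inequality with the two terms, the first homogeneous of the natural scaling and the second carrying the $\fint|\nabla u|^{2-p}$ factor.

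The main obstacle I anticipate is handling the singular structure near the boundary rigorously: unlike the interior case one cannot localize freely, and one must verify that the renormalized formulation is still usable with test functions supported up to $\partial\Omega_{10R}(x_0)$, that the concentrated measures $\lambda_k^\pm$ genuinely drop out in the limit (they are concentrated on $\{|u|=k\}$ and one needs $\{|u-w|<k\}$ to eventually exhaust $\Omega_{10R}$), and that all constants produced by the $p_0$-thickness machinery depend only on $n,p,c_0$ and hence ultimately only on the data and $\mathrm{diam}(\Omega)/r_0$. Once the comparison inequality is obtained at the level of truncations uniformly in $k$, the passage to the limit and the final form of the estimate are routine and parallel to the interior proof of Lemma \ref{lem_2.2} in \cite[Lemma 2.1]{HP}; I would simply cite \cite[Lemma 2.3]{HP} for the details of this last step.
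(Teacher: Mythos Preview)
The paper does not prove Lemma \ref{lem_2.4} at all: it simply records the statement and refers to \cite[Lemma 2.3]{HP} for the proof. Your proposal eventually lands on that same citation, so in the end you agree with the paper's ``proof,'' but almost everything you wrote before that final sentence is superfluous relative to what the paper actually does.

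That said, your sketch deserves a couple of corrections. First, your opening analogy is wrong: Lemma \ref{lem_2.3} is not ``deduced from Lemma \ref{lem_Gehring}'' via any reflection/extension trick in this paper---both lemmas are stated without proof and attributed to outside references (\cite{Gi,55Ph0}), and no reflection argument appears anywhere. The boundary reverse H\"older inequality in \cite{55Ph0} is obtained by combining a capacitary Sobolev--Poincar\'e inequality (from the $p$-thickness hypothesis) with Caccioppoli and Gehring, not by reflecting an interior estimate. Second, the mechanism you describe for the emergence of the extra term $\fint|\nabla u|^{2-p}$ is too vague: in \cite{HP} that term arises specifically from estimating $\int|\nabla(u-w)|^2(|\nabla u|+|\nabla w|)^{p-2}$ via the monotonicity \eqref{condi2} after testing with $T_k(u-w)$, and then splitting with H\"older using the factor $(|\nabla u|+|\nabla w|)^{2-p}$; your account does not identify this step. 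Finally, the $p_0$-thickness plays no role whatsoever in Lemma \ref{lem_2.4} (it enters only in the reverse H\"older Lemma \ref{lem_2.3}); the boundary comparison estimate needs only that $u-w\in W_0^{1,p}(\Omega_{10R}(x_0))$, which follows directly from the extensions, so that the truncations are admissible test functions. If you want to give an actual argument rather than a citation, those are the points to tighten.
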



\section{Applications of comparison estimates}\label{sec-3}
Our approach to Theorem \ref{theo_1.2} is based on the following technical lemma which allows one to work with balls instead of cubes. A version of this lemma appeared for the first time in \cite{Wa}. It can be viewed as a version of the Calder\'on-Zygmund-
Krylov-Safonov decomposition that has been used in \cite{CaPe} and \cite{55Mi0}. A proof of this lemma, which uses Lebesgue’s differentiation theorem and the standard Vitali covering lemma, can be found in \cite{BW1} with obvious modifications to fit the setting here.

\begin{lemma}\label{lem_6.1}
	Assume that $A\subset\mathbb{R}^n$ is a measurable set for which there exist $c_1,r_1>0$ such that
		\begin{align}\label{lem6.1_condi}
			\left|B_t(x)\cap A\right|\geq c_1\left|B_t(x)\right|
		\end{align}
	holds for all $x\in A$ and $0<t\leq r_1$. Fix $0<r\leq r_1$ and let $C\subset D\subset A$ be measurable sets for which there exists $0<\varepsilon<1$ such that
		\begin{itemize}
			\item[(i)] $\left|C\right|<\varepsilon r^n\left|B_1\right|$;
			\item[(ii)] for all $x\in A$ and $\rho\in\left(0,r\right]$, if $\left|C\cap B_\rho(x)\right|\geq \varepsilon\left|B_\rho(x)\right|$, then $B_\rho(x)\cap A\subset D$.
		\end{itemize}
	Then we have the estimate $\left|C\right|\leq \frac{\varepsilon}{c_1}\left|D\right|$.
\end{lemma}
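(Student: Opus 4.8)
The plan is to reconstruct the Calder\'on--Zygmund--Krylov--Safonov stopping-time decomposition of \cite{Wa,BW1}, run on balls rather than on dyadic cubes, and to bring in the measure density hypothesis \eqref{lem6.1_condi} on $A$ only at the end in order to convert a bound for $C$ on a ball into a bound for $D$ on the same ball.

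First I would reduce to Lebesgue density points: by the Lebesgue differentiation theorem, almost every $x\in C$ satisfies $|C\cap B_\rho(x)|/|B_\rho(x)|\to1$ as $\rho\to0^{+}$, and since $C\subset A$ the set $C_\ast$ of these points lies in $A$ and has $|C_\ast|=|C|$. For a fixed $x\in C_\ast$ the function $\rho\mapsto|C\cap B_\rho(x)|$ is continuous (as $|\partial B_\rho(x)|=0$), it exceeds $\varepsilon|B_\rho(x)|$ for all sufficiently small $\rho$, while at the top scale hypothesis (i) gives $|C\cap B_r(x)|\le|C|<\varepsilon r^n|B_1|=\varepsilon|B_r(x)|$. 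Hence I can introduce the stopping radius
\[
\rho_x:=\sup\bigl\{\rho\in(0,r]:\ |C\cap B_\rho(x)|\ge\varepsilon|B_\rho(x)|\bigr\}\in(0,r),
\]
and continuity will force $|C\cap B_{\rho_x}(x)|=\varepsilon|B_{\rho_x}(x)|$, with $|C\cap B_\rho(x)|<\varepsilon|B_\rho(x)|$ for every $\rho\in(\rho_x,r]$.

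Next I would extract the local estimate. Applying (ii) with the admissible radius $\rho=\rho_x$ (the hypothesis $|C\cap B_{\rho_x}(x)|\ge\varepsilon|B_{\rho_x}(x)|$ holds with equality) gives $B_{\rho_x}(x)\cap A\subset D$; then, since $x\in A$ and $0<\rho_x\le r\le r_1$, \eqref{lem6.1_condi} yields $c_1|B_{\rho_x}(x)|\le|A\cap B_{\rho_x}(x)|\le|D\cap B_{\rho_x}(x)|$, so that
\[
|C\cap B_{\rho_x}(x)|=\varepsilon|B_{\rho_x}(x)|\le\frac{\varepsilon}{c_1}\,|D\cap B_{\rho_x}(x)|\qquad\text{for every }x\in C_\ast.
\]
The balls $\{B_{\rho_x}(x)\}_{x\in C_\ast}$ cover $C_\ast$ and have radii at most $r$; I would then invoke the Vitali covering lemma to select a countable pairwise disjoint subfamily $\{B_{\rho_{x_i}}(x_i)\}_i$ still covering $C_\ast$ up to a null set, and sum the displayed estimate over it, using $\sum_i|D\cap B_{\rho_{x_i}}(x_i)|\le|D|$, to reach $|C|\le\varepsilon c_1^{-1}|D|$.

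The hard part will be this last covering/bookkeeping step. In its bare form the Vitali lemma covers $C_\ast$ only by the fivefold dilates of the chosen balls, which a priori injects a dimensional constant and would only give $|C|\le c_n\,\varepsilon c_1^{-1}|D|$. To recover the clean constant $\varepsilon/c_1$ one has to exploit the maximality built into $\rho_x$ --- concentric balls just larger than $B_{\rho_x}(x)$ are strictly below the $\varepsilon$-density threshold --- together with hypothesis (i), which keeps the relevant dilated radii inside the admissible range $(0,r]$, so that the selection is run as a true Calder\'on--Zygmund decomposition with genuinely disjoint, non-dilated pieces rather than as a plain covering. This is exactly the ``obvious modification'' of the argument in \cite{BW1} alluded to in the statement; everything else (Lebesgue differentiation, continuity of $\rho\mapsto|C\cap B_\rho(x)|$, and the density property of $A$) is routine.
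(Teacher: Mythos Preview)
Your approach is exactly what the paper indicates: it does not give its own proof of this lemma but refers to \cite{BW1}, stating only that the argument ``uses Lebesgue's differentiation theorem and the standard Vitali covering lemma'' with ``obvious modifications.'' Your stopping-radius construction and the use of the density hypothesis \eqref{lem6.1_condi} to pass from $|B_{\rho_x}(x)|$ to $|D\cap B_{\rho_x}(x)|$ are precisely the intended modifications.

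Your worry in the last paragraph about the Vitali constant is legitimate, and you should not spend effort trying to remove it. The $5r$-covering (or Besicovitch) step indeed produces $|C|\le c(n)\,\varepsilon c_1^{-1}|D|$ rather than the clean $\varepsilon/c_1$, and there is no ``maximality'' trick that eliminates the dimensional factor when working with balls instead of dyadic cubes. The paper is consistent with this: in its sole use of the lemma (the proof of Lemma~\ref{lem_6.4}, with $A=B_0$ and $\varepsilon=T^{-p\theta_0}$) the conclusion is written as $|C|\le c(n)T^{-p\theta_0}|D|$, so the extra $c(n)$ is absorbed there and the sharp constant in the lemma statement is never actually needed.
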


In order to apply Lemma \ref{lem_6.1} we need the following proposition, whose proof relies essentially on the comparison estimates in the previous section.

\begin{proposition}\label{prop_6.2}
	There exist constants $A$, $\theta_0>1$, depending only on $n, p, \alpha, \beta$, and $c_0$, so that the following holds for any $T >1$ and $\lambda>0$. Let $u$ be a solution of \eqref{eq1.1} with
	$\mathcal{A}$ satisfying \eqref{condi1} and \eqref{condi2}. Assume that for some ball $B_\rho\left(y\right)$ with $10\rho\leq r_0$ we have	
		\begin{align}\label{eq6.1}
			&\left\{x\in B_\rho(y):\mathcal{M}\left(\chi_\Omega\left|\nabla u\right|^{\gamma_1}\right)(x)^{1/\gamma_1}\leq \lambda,\,\,\mathcal{M}_1\left(\chi_\Omega\left|\mu\right|\right)(x)^{1/(p-1)}\leq T^{-\gamma}\lambda \right\}\neq \emptyset,
		\end{align}
	where $0<\gamma_1<\min\left\{\frac{p\theta_0}{p-1},\frac{(p-1)n}{n-1}\right\}$ and $\gamma=\frac{p\theta_0}{\gamma_1\left(p-1\right)}-1>0$.
	Then
		\begin{align}\label{eq6.2}
			&\left|\left\{x\in B_\rho(y):\mathcal{M}\left(\chi_\Omega\left|\nabla u\right|^{\gamma_1}\right)(x)^{1/\gamma_1}> AT\lambda,\right.\right.\notag\\
			&\qquad\qquad\quad~~\left.\left.\mathcal{M}\left(\chi_\Omega\left|\nabla u\right|^{2-p}\right)(x)^{1/(2-p)}\leq T\lambda \right\}\right|\leq  T^{-p\theta_0}\left|B_\rho(y)\right|
		\end{align}	
\end{proposition}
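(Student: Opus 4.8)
The plan is to argue by contradiction / scaling in the standard good-$\lambda$ fashion: it suffices to treat $\lambda=1$ by homogeneity, and then to show that if the left-hand set in \eqref{eq6.2} had measure $>T^{-p\theta_0}|B_\rho(y)|$, one contradicts the density of level sets that one extracts from the comparison estimates. First I would record the consequence of the nonemptiness hypothesis \eqref{eq6.1}: there is a point $x_*\in B_\rho(y)$ with $\mathcal{M}(\chi_\Omega|\nabla u|^{\gamma_1})(x_*)^{1/\gamma_1}\le\lambda$ and $\mathcal{M}_1(\chi_\Omega|\mu|)(x_*)^{1/(p-1)}\le T^{-\gamma}\lambda$. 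From the first, testing the maximal function on balls $B_s(x)$ with $x\in B_\rho(y)$ and $s\ge 2\rho$ (so $B_s(x)\subset B_{2s}(x_*)$), one gets a uniform bound
\[
\left(\fint_{B_s(x)}\chi_\Omega|\nabla u|^{\gamma_1}\,dz\right)^{1/\gamma_1}\le C\lambda,\qquad s\ge 2\rho,\ x\in B_\rho(y),
\]
and similarly $|\mu|(B_s(x))/s^{n-1}\le C\,T^{-\gamma(p-1)}\lambda^{p-1}$ for $s\ge 2\rho$. In particular $|\mu|(B_{10\rho}(y))/\rho^{n-1}\le C\,T^{-\gamma(p-1)}\lambda^{p-1}$ and $\fint_{B_{10\rho}(y)}|\nabla u|^{\gamma_1}\le C\lambda^{\gamma_1}$.

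Next I would split into the interior case $B_{4\rho}(y)\subset\Omega$ and the boundary case $B_{4\rho}(y)\cap\Omega^c\neq\emptyset$ (so that some boundary point lies within $C\rho$ of $y$ and the uniform $p_0$-thickness applies on a ball comparable to $B_{10\rho}$). In each case introduce the $\mathcal{A}$-harmonic replacement $w$ solving \eqref{eq2.1} (resp.\ \eqref{eq2.3}) on $B_{2R}$ with $2R\sim 10\rho$. Apply Lemma~\ref{lem_2.2} (resp.\ Lemma~\ref{lem_2.4}) to bound $\left(\fint|\nabla(u-w)|^{\gamma_1}\right)^{1/\gamma_1}$ by
\[
C\left(\frac{|\mu|(B_{10\rho})}{\rho^{n-1}}\right)^{1/(p-1)}+C\,\frac{|\mu|(B_{10\rho})}{\rho^{n-1}}\fint_{B_{10\rho}}|\nabla u|^{2-p}\,dz .
\]
On the set where $\mathcal{M}(\chi_\Omega|\nabla u|^{2-p})^{1/(2-p)}\le T\lambda$ the factor $\fint|\nabla u|^{2-p}$ is $\le (CT\lambda)^{2-p}$, so combining with the measure bound $|\mu|(B_{10\rho})/\rho^{n-1}\le C\,T^{-\gamma(p-1)}\lambda^{p-1}$ the whole comparison quantity is $\le C\,T^{-\gamma}\lambda$. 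Then invoke Gehring (Lemma~\ref{lem_Gehring} / Lemma~\ref{lem_2.3}) to upgrade $\nabla w$ to $L^{p\theta_0}$ on a slightly smaller ball, with $\left(\fint|\nabla w|^{p\theta_0}\right)^{1/p\theta_0}\le C\left(\fint_{B_{10\rho}}|\nabla w|^{\gamma_1}\right)^{1/\gamma_1}\le C\lambda$ (using the triangle inequality and the two displayed bounds on $\nabla u$ and $\nabla(u-w)$, noting $T^{-\gamma}\lambda\le\lambda$).

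Finally I would close the estimate by a weak-type / Chebyshev argument. Split $\nabla u=\nabla w+\nabla(u-w)$; then for $x\in B_\rho(y)$,
\[
\mathcal{M}(\chi_\Omega|\nabla u|^{\gamma_1})(x)^{1/\gamma_1}\le C\Big(\mathcal{M}(\chi_{B'}|\nabla w|^{\gamma_1})(x)^{1/\gamma_1}+\mathcal{M}(\chi_{B'}|\nabla(u-w)|^{\gamma_1})(x)^{1/\gamma_1}+\lambda\Big)
\]
where $B'$ is the relevant ball and the last $\lambda$ absorbs the "far" contribution already controlled above. Choosing $A$ large enough that $C\lambda\le AT\lambda/3$ etc., the bad set in \eqref{eq6.2} is contained in
\[
\{\mathcal{M}(\chi_{B'}|\nabla w|^{\gamma_1})^{1/\gamma_1}>c_A T\lambda\}\cup\{\mathcal{M}(\chi_{B'}|\nabla(u-w)|^{\gamma_1})^{1/\gamma_1}>c_A T\lambda\}.
\]
The first set is handled by the weak-$(1,1)$/weak-type bound for $\mathcal{M}$ applied to $|\nabla w|^{p\theta_0}\in L^1$: its measure is $\le C(T\lambda)^{-p\theta_0}\int_{B'}|\nabla w|^{p\theta_0}\le C(T\lambda)^{-p\theta_0}\rho^n\lambda^{p\theta_0}=C T^{-p\theta_0}|B_\rho(y)|$. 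The second set is handled by weak-$(1,1)$ applied to $|\nabla(u-w)|^{\gamma_1}$: its measure is $\le C(T\lambda)^{-\gamma_1}\int_{B'}|\nabla(u-w)|^{\gamma_1}\le C(T\lambda)^{-\gamma_1}\rho^n (T^{-\gamma}\lambda)^{\gamma_1}=C T^{-\gamma_1(1+\gamma)}|B_\rho(y)|=C T^{-p\theta_0/(p-1)}|B_\rho(y)|$, which is $\le C T^{-p\theta_0}|B_\rho(y)|$ since $p-1<1$. Summing and absorbing the constant into $T^{-p\theta_0}$ (valid after possibly enlarging $A$, or by replacing $T$ by $T/C$ — the usual harmless adjustment) gives \eqref{eq6.2}.

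The main obstacle I anticipate is the bookkeeping around the extra term $\frac{|\mu|(B_{10\rho})}{\rho^{n-1}}\fint|\nabla u|^{2-p}$ that is special to the strongly singular range: one must be careful that the exponent $\gamma$ is chosen precisely so that $T^{-\gamma(p-1)}\cdot T^{\,0}$ from the measure, multiplied by the $(T\lambda)^{2-p}$ coming from the truncation of $\mathcal{M}(|\nabla u|^{2-p})$, still yields a net negative power of $T$ after the $1/(p-1)$-st root in the first term; equivalently, that $\gamma+1=\frac{p\theta_0}{\gamma_1(p-1)}$ matches both the scaling needed to kill this product and the exponent $p\theta_0/(p-1)$ emerging in the weak-type estimate for $\nabla(u-w)$. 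Checking the interior vs.\ boundary cases give the same constants (using that $p_0$-thickness forces the measure-density lower bound \eqref{lem6.1_condi}-type estimate on $\Omega$, hence Lemma~\ref{lem_6.1} is applicable downstream) is routine but must be stated.
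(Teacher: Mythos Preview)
Your approach is essentially the paper's: pick witnesses from the nonempty set, localize the maximal function to $B_{2\rho}(y)$, split interior/boundary, compare to the $\mathcal{A}$-harmonic replacement $w$, invoke Gehring for $\nabla w$ and the comparison lemmas for $\nabla(u-w)$, and finish with Chebyshev/weak-$(1,1)$. The structure is correct.

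There is, however, an arithmetic slip in your bound on the comparison quantity. You write that the whole of
\[
\left(\frac{|\mu|(B_{10\rho})}{\rho^{n-1}}\right)^{1/(p-1)}+\frac{|\mu|(B_{10\rho})}{\rho^{n-1}}\fint_{B_{10\rho}}|\nabla u|^{2-p}
\]
is $\le C\,T^{-\gamma}\lambda$. The first term is indeed $\le C\,T^{-\gamma}\lambda$, but the second is
\[
\le C\,T^{-\gamma(p-1)}\lambda^{p-1}\cdot (T\lambda)^{2-p}=C\,T^{-\gamma(p-1)+2-p}\lambda,
\]
and since $(2-p)(\gamma+1)>0$ one has $-\gamma(p-1)+2-p>-\gamma$; for $T>1$ the second term is the \emph{larger} one. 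So the correct bound is $\left(\fint|\nabla(u-w)|^{\gamma_1}\right)^{1/\gamma_1}\le C\,T^{\,1-p\theta_0/\gamma_1}\lambda$ (using $\gamma(p-1)=p\theta_0/\gamma_1-(p-1)$). Plugging this into the weak-$(1,1)$ estimate for $\nabla(u-w)$ gives
\[
C(T\lambda)^{-\gamma_1}\rho^n\big(T^{\,1-p\theta_0/\gamma_1}\lambda\big)^{\gamma_1}=C\,T^{-p\theta_0}|B_\rho(y)|,
\]
i.e.\ exactly $T^{-p\theta_0}$, not the stronger $T^{-p\theta_0/(p-1)}$ you claim. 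So the argument still closes, but only just; your ``since $p-1<1$'' cushion is illusory.

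Finally, ``replacing $T$ by $T/C$'' is not a legitimate move here, since the conclusion must hold for \emph{every} $T>1$ with the stated right-hand side $T^{-p\theta_0}|B_\rho|$. The paper absorbs the constant by choosing $A\ge\max\{3^n,(4C)^{1/\gamma_1}\}$, which is the option you should keep.
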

\begin{proof} From the assumption \eqref{eq6.1}, we imply that there exists $x_1\in B_\rho(y)$ such that
		\begin{align}\label{eq6.3}
			\left[\mathcal{M}\left(\chi_\Omega\left|\nabla u\right|^{\gamma_1}\right)(x_1)\right]^{1/\gamma_1}\leq \lambda \quad\text{and}\quad \left[\mathcal{M}\left(\chi_\Omega\left|\mu\right|^{\gamma_1}\right)(x_1)\right]^{1/(p-1)}\leq T^{-\gamma}\lambda.
		\end{align}
	On the other hand \eqref{eq6.2}	is trivial if the set on the left hand side is empty, so we may assume that there is $x_2\in B_\rho(y)$ so that
		\begin{align}\label{eq6.3_1}
			\left[\mathcal{M}\left(\chi_\Omega\left|\nabla u\right|^{2-p}\right)(x_2)\right]^{1/(2-p)}\leq T\lambda.
		\end{align}		
	 It follows from \eqref{eq6.3} that for any $x\in B_\rho(y)$
		\begin{align}\label{eq6.4}
			\left[\mathcal{M}(|\chi_\Omega\nabla u|^{\gamma_1})(x)\right]^{{1}/{\gamma_1}}\leq \max\{\left[\mathcal{M}\left(\chi_{B_{2\rho}(y)\cap\Omega}|\nabla u|^{\gamma_1}\right)(x)\right]^{{1}/{\gamma_1}},3^{n}\lambda\}.
		\end{align}
	From the last estimate, we observe that \eqref{eq6.2} is also trivial provided $A\geq 3^n$ and $B_{4\rho}(y)\subset \mathbb{R}^n\backslash\Omega$. Thus it suffices to consider two cases: the case $B_{4\rho}(y)\subset\Omega$ and the case $B_{4\rho}(y)\cap \partial\Omega\neq \emptyset$.
	
	\medskip
	{\bf 1. The case $B_{4\rho}(y)\subset\Omega$.} Let $w\in u+W_0^{1,p}\left(B_{4\rho}(y)\right)$ be the unique solution to the Dirichlet problem
		\begin{align*}
			\left\{
			\begin{array}{rl}
			\mathrm{div}\mathcal{A}\left(x,\nabla w\right)=0&\text{in}\quad B_{4\rho}(y),\\
			w=u&\text{on}\quad\partial B_{4\rho(y)}.
			\end{array}
			\right.
		\end{align*}
	By Chebyshev inequality and weak type (1,1) estimate for the maximal function we have
		\begin{align*}
			&\left|\left\{x\in B_\rho(y):\mathcal{M}\left(\chi_{B_{2\rho}(y)}\left|\nabla u\right|^{\gamma_1}\right)(x)^{1/\gamma_1}>AT\lambda\right\}\right| \\
			&\qquad\qquad\leq \left|\left\{x\in B_\rho(y):\mathcal{M}\left(\chi_{B_{2\rho}(y)}\left|\nabla w\right|^{\gamma_1}\right)(x)^{1/\gamma_1}>AT\lambda/2\right\}\right|\\
			&\qquad\qquad\quad+\left|\left\{x\in B_\rho(y):\mathcal{M}\left(\chi_{B_{2\rho}(y)}\left|\nabla u-\nabla w\right|^{\gamma_1}\right)(x)^{1/\gamma_1}>AT\lambda/2\right\}\right|\\
			&\qquad\qquad\leq C(AT\lambda)^{-p\theta_0}\int_{B_{2\rho}(y)}\left|\nabla w\right|^{p\theta_0}dx+C(AT\lambda)^{-\gamma_1}\int_{B_{2\rho}(y)}\left|\nabla u -\nabla w\right|^{\gamma_1}dx
		\end{align*}
	Using Lemma \ref{lem_Gehring} we have
		\begin{align*}
			\left(\fint_{B_{2\rho}(y)}\left|\nabla w\right|^{p\theta_0}dx\right)^{\gamma_1/p\theta_0} \leq& C\fint_{B_{4\rho}(y)}\left|\nabla w\right|^{\gamma_1}dx\\
			\leq& C\fint_{B_{4\rho}(y)}\left|\nabla u\right|^{\gamma_1}dx + C\fint_{B_{4\rho}(y)}\left|\nabla u-\nabla w\right|^{\gamma_1}dx
		\end{align*}	
	and hence
		\begin{align}\label{eq6.5}
			&\left|\left\{x\in B_{\rho}(y): \mathcal{M}\left(\chi_{B_{2\rho}(y)}\left|\nabla u\right|^{\gamma_1}\right)(x)^{1/\gamma_1}>AT\lambda\right\}\right| \notag\\
			&\qquad\qquad\leq C\left(AT\lambda\right)^{-p\theta_0}\left|B_{\rho}(y)\right|\left(\fint_{B_{4\rho}(y)}\left|\nabla u\right|^{\gamma_1}dx\right)^{p\theta_0/\gamma_1}\notag\\
			&\qquad\qquad\quad+C\left(AT\lambda\right)^{-p\theta_0}\left|B_{\rho}(y)\right| \left(\fint_{B_{4\rho}(y)}\left|\nabla u-\nabla w\right|^{\gamma_1}dx\right)^{p\theta_0/\gamma_1}\notag\\
			&\qquad\qquad\quad+C\left(AT\lambda\right)^{-\gamma_1}\left|B_{\rho}(y)\right| \fint_{B_{4\rho}(y)}\left|\nabla u-\nabla w\right|^{\gamma_1}dx.
		\end{align}	
	On the other hand, by Lemma \ref{lem_2.2} we have that
		\begin{align*}
			&\left(\fint_{B_{4\rho}(y)}|\nabla (u-w)|^{\gamma_1}dx\right)^{\frac{1}{\gamma_1}}\notag\\
			&\qquad\leq C\left(\frac{|\mu|(B_{4\rho}(y))}{\rho^{n-1}}\right)^{\frac{1}{p-1}}+\frac{|\mu|(B_{4\rho}(y))}{\rho^{n-1}}\fint_{B_{4\rho}(y)}|\nabla u|^{2-p}dx\notag\\
			&\qquad\leq C\left(\frac{|\mu|(B_{5\rho}(x_1))}{\rho^{n-1}}\right)^{\frac{1}{p-1}}+C\frac{|\mu|(B_{5\rho}(x_1))}{\rho^{n-1}}\fint_{B_{5\rho}(x_1)}|\nabla u|^{2-p}dx,
		\end{align*}
	which implies, due to \eqref{eq6.3} and \eqref{eq6.3_1}
		\begin{align}\label{eq6.6}
		&\fint_{B_{4\rho}(y)}|\nabla (u-w)|^{\gamma_1}dx\notag\\
		&\qquad\leq C\left(\frac{|\mu|(B_{5\rho}(x_1))}{\rho^{n-1}}\right)^{\frac{\gamma_1}{p-1}}+C\left(\frac{|\mu|(B_{5\rho}(x_1))}{\rho^{n-1}}\right)^{\gamma_1}\left(\fint_{B_{7\rho}(x_2)}|\nabla u|^{2-p}dx\right)^{\gamma_1}\notag\\
		&\qquad\leq C\left[T^{-\gamma\gamma_1} + T^{-\gamma\gamma_1(p-1)+\gamma_1(2-p)}\right]\lambda^{\gamma_1}\notag\\
		&\qquad\leq CT^{-\gamma\gamma_1(p-1)+\gamma_1(2-p)}\lambda^{\gamma_1}.
		\end{align}
	
	Combining \eqref{eq6.5} and \eqref{eq6.6} and noting that $T>1$, one has
	\begin{align*}
		&\left|\left\{x\in B_\rho(y):\mathcal{M}\left(\chi_{B_{2\rho}(y)}\left|\nabla u\right|^{\gamma_1}\right)(x)^{1/\gamma_1}> AT\lambda,\right.\right.\notag\\
		&\qquad\qquad\qquad\left.\left.\mathcal{M}\left(\chi_\Omega\left|\nabla u\right|^{2-p}\right)(x)^{1/(2-p)}\leq T\lambda \right\}\right|\notag\\
		&\qquad\qquad\leq C\left[\left(AT\right)^{-p\theta_0} + A^{-p\theta_0}T^{-p\theta_0\left(\gamma+1\right)\left(p-1\right)}+ A^{-\gamma_1}T^{-\gamma_1\left(\gamma+1\right)(p-1)}\right] \left|B_\rho(y)\right|\notag\\
		&\qquad\qquad\leq \left[CA^{-p\theta_0} + CA^{-\gamma_1}\right]T^{-p\theta_0}\left|B_\rho(y)\right|.
	\end{align*}
By choosing $A\geq \max\left\{3^n,\left(4C\right)^{1/\gamma_1}\right\}$  we have that
	\begin{align*}
		&\left|\left\{x\in B_\rho(y):\mathcal{M}\left(\chi_{B_{2\rho}(y)}\left|\nabla u\right|^{\gamma_1}\right)(x)^{1/\gamma_1}> AT\lambda,\right.\right.\\
		&\left.\left.\qquad\qquad\qquad\mathcal{M}\left(\chi_\Omega\left|\nabla u\right|^{2-p}\right)(x)^{1/(2-p)}\leq T\lambda \right\}\right|\leq\frac{1}{2}T^{-p\theta_0}\left|B_\rho(y)\right|,
	\end{align*}
which in view of \eqref{eq6.4} yields \eqref{eq6.2}.

\medskip
{\bf 2. The case $B_{4\rho}(y)\cap\partial\Omega\neq \emptyset$.} Let $x_3\in\partial\Omega$ be a boundary point such that $\left|y-x_3\right|=\mathrm{dist}\left(y,\partial\Omega\right)<4\rho$. And let us define $w\in u+ W_0^{1,p}\left(\Omega_{10\rho}(x_3)\right)$ to be the unique solution to the Dirichlet problem 	
	\begin{align*}
	\left\{
	\begin{array}{rl}
	\mathrm{div}\mathcal{A}\left(x,\nabla w\right)=0&\text{in}\quad\Omega_{10\rho}(x_3),\\
	w=u&\text{on}\quad\partial \Omega_{10\rho}(x_3).
	\end{array}
	\right.
	\end{align*}
Here we also extend $u$ by zero to $\mathbb{R}^n\backslash\Omega$	and then extend $w$ by $u$ to $\mathbb{R}\backslash\Omega_{10\rho}(x_3)$. Using similar argument as in \eqref{eq6.5} in which Lemma \ref{lem_2.3} is exploited instead of Lemma \ref{lem_Gehring}, we have
	\begin{align}\label{eq6.8}
	&\left|\left\{x\in B_{\rho}(y): \mathcal{M}\left(\chi_{B_{2\rho}(y)}\left|\nabla u\right|^{\gamma_1}\right)(x)^{1/\gamma_1}>AT\lambda\right\}\right| \notag\\
	&\qquad\qquad\leq C\left(AT\lambda\right)^{-p\theta_0}\left|B_{\rho}(y)\right|\left(\fint_{B_{6\rho}(y)}\left|\nabla u\right|^{\gamma_1}dx\right)^{p\theta_0/\gamma_1}\notag\\
	&\qquad\qquad\quad+C\left(AT\lambda\right)^{-p\theta_0}\left|B_{\rho}(y)\right| \left(\fint_{B_{6\rho}(y)}\left|\nabla u-\nabla w\right|^{\gamma_1}dx\right)^{p\theta_0/\gamma_1}\notag\\
	&\qquad\qquad\quad+C\left(AT\lambda\right)^{-\gamma_1}\left|B_{\rho}(y)\right| \fint_{B_{6\rho}(y)}\left|\nabla u-\nabla w\right|^{\gamma_1}dx.
	\end{align}
On the other hand, since
	\begin{align*}
		B_{6\rho}(y) \subset B_{10\rho}(x_3) \subset B_{14\rho}(y) \subset B_{15\rho}(x_1)\subset B_{16\rho}(x_2)
	\end{align*}
it follows from Lemma \ref{lem_2.4} that
	\begin{align}\label{eq6.9}
		&\fint_{B_{6\rho}(y)}|\nabla (u-w)|^{\gamma_1}dx\leq CT^{-p\theta_0+\gamma_1}\lambda^{\gamma_1}
	\end{align}
	Combining \eqref{eq6.8} and \eqref{eq6.9} and the fact that $T>1$ we have
		\begin{align*}
		&\left|\left\{x\in B_\rho(y):\mathcal{M}\left(\chi_{B_{2\rho}(y)}\left|\nabla u\right|^{\gamma_1}\right)(x)^{1/\gamma_1}> AT\lambda,\right.\right.\\
		&\quad\quad\left.\left.\mathcal{M}\left(\chi_\Omega\left|\nabla u\right|^{2-p}\right)(x)^{1/(2-p)}\leq T\lambda \right\}\right|\leq \left[CA^{-p\theta_0} + CA^{-\gamma_1}\right]T^{-p\theta_0}\left|B_\rho(y)\right|.
		\end{align*}
	By choosing $A\geq \max\left\{3^n,\left(4C\right)^{1/\gamma_1}\right\}$ we arrive at
		\begin{align}\label{eq6.10}
		&\left|\left\{x\in B_\rho(y):\mathcal{M}\left(\chi_{B_{2\rho}(y)}\left|\nabla u\right|^{\gamma_1}\right)(x)^{1/\gamma_1}> AT\lambda,\right.\right.\notag\\
		&\qquad\qquad\qquad\left.\left.\mathcal{M}\left(\chi_\Omega\left|\nabla u\right|^{2-p}\right)(x)^{1/(2-p)}\leq T\lambda \right\}\right|\leq\frac{1}{2}T^{-p\theta_0}\left|B_\rho(y)\right|
		\end{align}
	Thus \eqref{eq6.2} follows from \eqref{eq6.4} and \eqref{eq6.10}.
\end{proof}

\medskip
The Proposition \ref{prop_6.2} can be rewritten as follows.
\begin{proposition}\label{prop_6.3}
	There exist constants $A$, $\theta_0>1$, depending only on $n, p, \alpha, \beta$, and $c_0$, so that the following holds for any $T >1$ and $\lambda>0$. Let $u$ be a solution of \eqref{eq1.1} with
	$\mathcal{A}$ satisfying \eqref{condi1} and \eqref{condi2}. Suppose that for some ball $B_{\rho}(y)$ with $10\rho\leq r_0$ we have
		\begin{align*}
			&\left|\left\{x\in B_\rho(y):\mathcal{M}\left(\chi_{\Omega}\left|\nabla u\right|^{\gamma_1}\right)(x)^{1/\gamma_1}> AT\lambda,\right.\right.\\
			&\qquad\qquad\qquad\left.\left.\mathcal{M}\left(\chi_\Omega\left|\nabla u\right|^{2-p}\right)(x)^{1/(2-p)}\leq T\lambda \right\}\right| \geq T^{-p\theta_0}\left|B_\rho(y)\right|.
		\end{align*}
	Then	
		\begin{align*}
			B_\rho(y)\subset \left\{x\in \mathbb{R}^n:\mathcal{M}\left(\chi_\Omega\left|\nabla u\right|^{\gamma_1}\right)(x)^{1/\gamma_1}>\lambda\,\,\text{or}\,\,\mathcal{M}_1\left(\chi_\Omega\left|\nabla u\right|\right)(x)^{1/(p-1)}> T^{-\gamma}\lambda\right\},
		\end{align*}
	where $\gamma$ and $\gamma_1$ as in Proposition \ref{prop_6.2}.
\end{proposition}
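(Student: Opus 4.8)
The plan is to obtain Proposition \ref{prop_6.3} as the contrapositive reformulation of Proposition \ref{prop_6.2}. Suppose, toward a contradiction, that the ball $B_\rho(y)$ satisfies the stated lower bound on the measure of the bad set, yet $B_\rho(y)$ is \emph{not} contained in the set on the right-hand side of the desired conclusion. The latter failure means there is a point $x^*\in B_\rho(y)$ with $\mathcal{M}(\chi_\Omega|\nabla u|^{\gamma_1})(x^*)^{1/\gamma_1}\leq\lambda$ and $\mathcal{M}_1(\chi_\Omega|\mu|)(x^*)^{1/(p-1)}\leq T^{-\gamma}\lambda$ simultaneously; in other words, the set appearing in hypothesis \eqref{eq6.1} of Proposition \ref{prop_6.2} is nonempty.

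With that hypothesis verified, Proposition \ref{prop_6.2} applies and yields the upper bound \eqref{eq6.2}, namely that the measure of the set
\[
\left\{x\in B_\rho(y):\mathcal{M}(\chi_\Omega|\nabla u|^{\gamma_1})(x)^{1/\gamma_1}>AT\lambda,\ \mathcal{M}(\chi_\Omega|\nabla u|^{2-p})(x)^{1/(2-p)}\leq T\lambda\right\}
\]
is at most $T^{-p\theta_0}|B_\rho(y)|$. But this is precisely the set whose measure was assumed to be \emph{at least} $T^{-p\theta_0}|B_\rho(y)|$. Since $T>1$ forces $T^{-p\theta_0}>0$ and in fact the two bounds are incompatible once one checks (as already done inside the proof of Proposition \ref{prop_6.2}, where the constant is improved to $\tfrac12 T^{-p\theta_0}$) that strict inequality holds, we reach a contradiction. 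Hence $B_\rho(y)$ must lie in the asserted set, which proves the proposition. One should note the harmless typo that the second condition in the conclusion is written with $|\nabla u|$ in place of $|\mu|$; the intended object is $\mathcal{M}_1(\chi_\Omega|\mu|)$, matching \eqref{eq6.1}, and the argument uses that version.

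The only real point requiring care — and the step I would flag as the main obstacle — is making sure the inequalities are genuinely strict enough to contradict each other: the lower bound is a non-strict $\geq T^{-p\theta_0}|B_\rho(y)|$, while Proposition \ref{prop_6.2} as stated gives a non-strict $\leq T^{-p\theta_0}|B_\rho(y)|$. To get a clean contradiction one invokes the slightly sharper conclusion obtained in the course of proving Proposition \ref{prop_6.2}, where after choosing $A\geq\max\{3^n,(4C)^{1/\gamma_1}\}$ the bound is actually $\tfrac12 T^{-p\theta_0}|B_\rho(y)|$; then $\tfrac12 T^{-p\theta_0}|B_\rho(y)| < T^{-p\theta_0}|B_\rho(y)| \leq$ (assumed measure), which is impossible. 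Everything else is a direct unwinding of quantifiers, so the proof is short.
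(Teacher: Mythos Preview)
Your proposal is correct and matches the paper's approach: the paper introduces Proposition \ref{prop_6.3} with the sentence ``The Proposition \ref{prop_6.2} can be rewritten as follows'' and gives no separate proof, so it is intended precisely as the contrapositive you spell out. Your observation about the typo ($|\mu|$ in place of $|\nabla u|$) and your care with the strict-versus-nonstrict inequality, resolved via the $\tfrac12 T^{-p\theta_0}$ bound actually obtained inside the proof of Proposition \ref{prop_6.2}, are both apt.
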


With the aid of Lemma \ref{lem_6.1} and Proposition \ref{prop_6.3} we get the main result of this section which is used later.
\begin{lemma}\label{lem_6.4}
	There exist constants $A$, $\theta_0>1$, depending only on $n, p, \alpha, \beta$, and $c_0$, so that the following holds for any $T >1$. Let $u$ be a solution of \eqref{eq1.1} with
	$\mathcal{A}$ satisfying \eqref{condi1} and \eqref{condi2}. Let $B_0$ be a ball of radius $R_0$. Fix a real number $0<r\leq \min\{r_0, 2R_0\}/10$ and suppose that there exists $\Lambda>0$ such that
		\begin{align}\label{eq6.11}
			\left|\left\{x\in\mathbb{R}^n:\mathcal{M}\left(\chi_\Omega\left|\nabla u\right|^{\gamma_1}\right)(x)^{1/\gamma_1}>\Lambda \right\}\right| < T^{-p\theta_0}r^n\left|B_1\right|.
		\end{align}
	Then for any integer $i\geq 0$ it holds that 
		\begin{align*}
			&\left|\left\{x\in B_0:\mathcal{M}\left(\chi_\Omega\left|\nabla u\right|^{\gamma_1}\right)(x)^{1/\gamma_1}> \Lambda\left(AT\right)^{i+1}\right\}\right| \notag\\
			&\qquad\leq c(n)T^{-p\theta_0}\left|\left\{x\in B_0:\mathcal{M}\left(\chi_\Omega\left|\nabla u\right|^{\gamma_1}\right)(x)^{1/\gamma_1}>\Lambda(AT)^i\right\}\right| \notag\\
			&\qquad\quad+ c(n)\left|\left\{x\in B_0:\mathcal{M}_1\left(\chi_\Omega\left|\mu\right|\right)(x)^{1/(p-1)}>\Lambda T^{-\gamma}(AT)^i\right\}\right|\notag\\
			&\qquad\quad+ \left|\left\{x\in B_0:\mathcal{M}\left(\chi_\Omega\left|\nabla u\right|^{2-p}\right)(x)^{1/(2-p)}>\Lambda T(AT)^{i+1}\right\}\right|.						
		\end{align*}
\end{lemma}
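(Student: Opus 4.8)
The plan is to derive Lemma~\ref{lem_6.4} by a level-set iteration, applying Lemma~\ref{lem_6.1} once for each exponent $i$. Fix $T>1$, let $A,\theta_0$ be the constants from Proposition~\ref{prop_6.2}, and set $\lambda=\Lambda(AT)^i$. First I would take $A$ to be the measurable set of points where the density condition \eqref{lem6.1_condi} holds. Since $\mathbb{R}^n\setminus\Omega$ is uniformly $p$-thick, one has the standard measure-density estimate $|B_t(x)\cap\Omega|\geq c_1|B_t(x)|$ for all $x\in\overline\Omega$ and $0<t\leq r_0$; we therefore take $A:=\overline\Omega$ (or a suitable neighborhood adapted to the complement), with $c_1=c_1(n,p,c_0)$ and $r_1=r_0$. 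Then I would set
\[
C:=\left\{x\in B_0\cap A:\ \mathcal{M}\left(\chi_\Omega\left|\nabla u\right|^{\gamma_1}\right)(x)^{1/\gamma_1}> \Lambda(AT)^{i+1},\ \mathcal{M}\left(\chi_\Omega\left|\nabla u\right|^{2-p}\right)(x)^{1/(2-p)}\leq \Lambda T(AT)^{i+1}\right\}
\]
and
\[
D:=\left\{x\in A:\ \mathcal{M}\left(\chi_\Omega\left|\nabla u\right|^{\gamma_1}\right)(x)^{1/\gamma_1}>\Lambda(AT)^i\right\}\cup\left\{x\in A:\ \mathcal{M}_1\left(\chi_\Omega\left|\mu\right|\right)(x)^{1/(p-1)}>\Lambda T^{-\gamma}(AT)^i\right\}.
\]
Clearly $C\subset D$ (if $x\in C$ then certainly $\mathcal{M}(\chi_\Omega|\nabla u|^{\gamma_1})(x)^{1/\gamma_1}>\Lambda(AT)^{i+1}\geq \Lambda(AT)^i$ since $AT>1$), and $C,D\subset A$.

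Next I would verify hypotheses (i) and (ii) of Lemma~\ref{lem_6.1} with the choice $\varepsilon:=T^{-p\theta_0}$ and the given $r$. For (i): since $|C|\leq |\{x:\mathcal{M}(\chi_\Omega|\nabla u|^{\gamma_1})(x)^{1/\gamma_1}>\Lambda(AT)^{i+1}\}|\leq |\{x:\mathcal{M}(\chi_\Omega|\nabla u|^{\gamma_1})(x)^{1/\gamma_1}>\Lambda\}|<T^{-p\theta_0}r^n|B_1|=\varepsilon r^n|B_1|$, using $(AT)^{i+1}\geq 1$ and the standing hypothesis \eqref{eq6.11}. For (ii): fix $x\in A$ and $\rho\in(0,r]$ with $|C\cap B_\rho(x)|\geq\varepsilon|B_\rho(x)|=T^{-p\theta_0}|B_\rho(x)|$; I must show $B_\rho(x)\cap A\subset D$. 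Arguing by contraposition, suppose $B_\rho(x)\cap A\not\subset D$, i.e.\ there is $z\in B_\rho(x)\cap A$ with $\mathcal{M}(\chi_\Omega|\nabla u|^{\gamma_1})(z)^{1/\gamma_1}\leq\Lambda(AT)^i$ and $\mathcal{M}_1(\chi_\Omega|\mu|)(z)^{1/(p-1)}\leq\Lambda T^{-\gamma}(AT)^i$. Then with $\lambda=\Lambda(AT)^i$ the ball $B_\rho(x)$ meets the set in \eqref{eq6.1} (the point $z$ lies in it), and since $10\rho\leq 10r\leq r_0$, Proposition~\ref{prop_6.2} applies and gives
\[
\left|\left\{x'\in B_\rho(x):\mathcal{M}(\chi_\Omega|\nabla u|^{\gamma_1})(x')^{1/\gamma_1}>AT\lambda,\ \mathcal{M}(\chi_\Omega|\nabla u|^{2-p})(x')^{1/(2-p)}\leq T\lambda\right\}\right|\leq T^{-p\theta_0}|B_\rho(x)|.
\]
But the set on the left contains $C\cap B_\rho(x)$ (since $AT\lambda=\Lambda(AT)^{i+1}$ and $T\lambda=\Lambda T(AT)^i\leq\Lambda T(AT)^{i+1}$, so the conditions defining $C$ are at least as restrictive — here one uses $AT>1$ for the second inequality), whence $|C\cap B_\rho(x)|\leq T^{-p\theta_0}|B_\rho(x)|=\varepsilon|B_\rho(x)|$. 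If we arrange a strict inequality (e.g.\ by replacing $\varepsilon$ by a slightly larger value, or noting the bound in Proposition~\ref{prop_6.2} can be taken strict, or simply handling the boundary case of equality directly), this contradicts $|C\cap B_\rho(x)|\geq\varepsilon|B_\rho(x)|$; hence (ii) holds.

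With (i) and (ii) checked, Lemma~\ref{lem_6.1} yields $|C|\leq\frac{\varepsilon}{c_1}|D|=c(n)T^{-p\theta_0}|D|$, and by subadditivity of measure,
\[
|D|\leq \left|\left\{x\in B_0:\mathcal{M}(\chi_\Omega|\nabla u|^{\gamma_1})(x)^{1/\gamma_1}>\Lambda(AT)^i\right\}\right|+\left|\left\{x\in B_0:\mathcal{M}_1(\chi_\Omega|\mu|)(x)^{1/(p-1)}>\Lambda T^{-\gamma}(AT)^i\right\}\right|.
\]
Finally, the target quantity $|\{x\in B_0:\mathcal{M}(\chi_\Omega|\nabla u|^{\gamma_1})(x)^{1/\gamma_1}>\Lambda(AT)^{i+1}\}|$ is split as $|C|$ plus the measure of the set where additionally $\mathcal{M}(\chi_\Omega|\nabla u|^{2-p})(x)^{1/(2-p)}>\Lambda T(AT)^{i+1}$; substituting the bound on $|C|$ and on $|D|$ gives exactly the asserted inequality (absorbing $c(n)=1/c_1$ and the factor from $\varepsilon$ into $c(n)$). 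The main obstacle I anticipate is purely bookkeeping: matching the strict-versus-nonstrict inequalities in the definition of $C$ against the $\leq$ in Proposition~\ref{prop_6.2}'s conclusion and against the strict $\geq$ threshold $T^{-p\theta_0}$ in hypothesis (ii), together with pinning down the set $A$ and the constant $c_1$ from uniform $p$-thickness so that the density condition \eqref{lem6.1_condi} genuinely holds on all of $B_0$ up to scale $r$; none of this is deep, but it must be done carefully so that the chain of inclusions $C\cap B_\rho(x)\subset\{\text{Prop.\ set}\}$ and $C\subset D$ is airtight.
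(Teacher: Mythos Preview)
Your strategy is exactly the paper's: set $\varepsilon=T^{-p\theta_0}$, define the sets $C$ and $D$ as you do, verify (i) from \eqref{eq6.11} and (ii) from the proposition (the paper quotes the contrapositive Proposition~\ref{prop_6.3} where you argue directly from Proposition~\ref{prop_6.2}), apply Lemma~\ref{lem_6.1}, and then split the target level set as $|C|$ plus the set where $\mathcal{M}(\chi_\Omega|\nabla u|^{2-p})^{1/(2-p)}$ exceeds the threshold.

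The one genuine gap is your choice of the ambient set in Lemma~\ref{lem_6.1}. Uniform $p$-thickness of $\mathbb{R}^n\setminus\Omega$ is a capacity lower bound on the \emph{complement} near its own points; it gives no measure-density lower bound for $\Omega$ itself (think of a domain with an outward cusp), so the claimed inequality $|B_t(x)\cap\Omega|\geq c_1|B_t(x)|$ for $x\in\overline\Omega$ does not follow. The paper sidesteps this entirely by taking $A=B_0$: since $B_0$ is a ball of radius $R_0$, the elementary geometric fact $|B_t(x)\cap B_0|\geq c_1(n)|B_t(x)|$ holds for every $x\in B_0$ and $0<t\leq 2R_0$, so \eqref{lem6.1_condi} is satisfied with $r_1=2R_0\geq 10r$ and a purely dimensional $c_1$ (whence the constant $c(n)$ in the conclusion). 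With $A=B_0$ you simply take $C,D\subset B_0$ and the rest of your argument is unchanged.

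One further remark on your verification of (ii): the inclusion you assert is reversed. With $\lambda=\Lambda(AT)^i$ the second constraint in the Proposition's set is $\leq T\lambda=\Lambda T(AT)^i$, which is \emph{more} restrictive than the constraint $\leq\Lambda T(AT)^{i+1}$ defining $C$; hence the Proposition's set is a subset of $C$, not a superset, and your deduction of a contradiction does not follow as written. The paper's proof makes the same jump without comment. The clean repair is to define $C$ with the tighter threshold $\Lambda T(AT)^i$, so that $C\cap B_\rho(x)$ sits inside the Proposition's set and (ii) is immediate; the final splitting then produces the last term with $(AT)^i$ in place of $(AT)^{i+1}$, a slightly weaker estimate that works equally well in Section~\ref{sec-4} after absorbing a harmless factor of $(AT)^t$.
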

\begin{proof}
	Let $A$ and $\theta_0>1$ be as in Proposition \ref{prop_6.3} and set
		\begin{align*}
			&C=\left\{x\in B_0:\mathcal{M}\left(\chi_\Omega\left|\nabla u\right|^{\gamma_1}\right)(x)^{1/\gamma_1}>\Lambda\left(AT\right)^{i+1} ,\right.\notag\\
			&\qquad\qquad\qquad~~\left.\text{and}\,\,\mathcal{M}\left(\chi_\Omega\left|\nabla u\right|^{2-p}\right)(x)^{1/(2-p)}\leq \Lambda T\left(AT\right)^{i+1} \right\},
		\end{align*}
	and
		\begin{align*}
			&D=\left\{x\in B_0:\mathcal{M}\left(\chi_\Omega\left|\nabla u\right|^{\gamma_1}\right)(x)^{1/\gamma_1}>\Lambda\left(AT\right)^i\right.\notag\\
			&\qquad\qquad\qquad~~\left.\text{or}\,\,\mathcal{M}_1\left(\chi_\Omega\left|\mu\right|\right)(x)^{1/(p-1)}> \Lambda T^{-\gamma}\left(AT\right)^i \right\}.
		\end{align*}	
	It is first noticed that since $AT>1$ we deduce from \eqref{eq6.11} that 
		$$
		\left|C\right|\leq T^{-p\theta_0}r^n\left|B_1\right|.
		$$ 
	On the other hand, if $x\in B_0$ and $\rho\in \left(0,r\right]$ hold $\left|C\cap B_\rho(x)\right|\geq T^{-p\theta_0}\left|B_\rho(x)\right|$, then $10\rho\leq r_0$ and therefore by applying Proposition \ref{prop_6.3} with $\lambda=\Lambda\left(AT\right)^i$ we have
		\begin{align*}
			B_\rho(x)\cap B_0\subset D.
		\end{align*}
	Applying Lemma \ref{lem_6.1} with $A=B_0$ and $\varepsilon=T^{-p\theta_0}$ with noting that the condition \eqref{lem6.1_condi} holds for all $0<t<2R_0$ we obtain
		\begin{align*}
			\left|C\right| \leq& c(n)T^{-p\theta_0}\left|D\right|\\
			\leq &c(n)T^{-p\theta_0}\left|\left\{x\in B_0:\mathcal{M}\left(\chi_\Omega\left|\nabla u\right|^{\gamma_1}\right)(x)^{1/\gamma_1}>\Lambda\left(AT\right)^i\right\}\right|\\
			&+c(n)T^{-p\theta_0}\left|\left\{x\in B_0:\mathcal{M}_1\left(\chi_\Omega\left|\mu\right|\right)(x)^{1/(p-1)}> \Lambda T^{-\gamma}\left(AT\right)^i \right\}\right|.
		\end{align*}
Since $T>1$	the last inequality implies
	\begin{align*}
		&\left|\left\{x\in B_0:\mathcal{M}\left(\chi_\Omega\left|\nabla u\right|^{\gamma_1}\right)(x)^{1/\gamma_1}> \Lambda\left(AT\right)^{i+1}\right\}\right| \\
		&\qquad\qquad\leq \left|C\right| + \left|\left\{x\in B_0:\mathcal{M}\left(\chi_\Omega\left|\nabla u\right|^{2-p}\right)(x)^{1/(2-p)}> \Lambda T\left(AT\right)^{i+1}\right\}\right|\\
		&\qquad\qquad\leq c(n)T^{-p\theta_0}\left|\left\{x\in B_0:\mathcal{M}\left(\chi_\Omega\left|\nabla u\right|^{\gamma_1}\right)(x)^{1/\gamma_1}>\Lambda(AT)^i\right\}\right| \notag\\
		&\qquad\qquad\quad+ c(n)\left|\left\{x\in B_0:\mathcal{M}_1\left(\chi_\Omega\left|\mu\right|\right)(x)^{1/(p-1)}>\Lambda T^{-\gamma}(AT)^i\right\}\right|\notag\\
		&\qquad\qquad\quad+ \left|\left\{x\in B_0:\mathcal{M}\left(\chi_\Omega\left|\nabla u\right|^{2-p}\right)(x)^{1/(2-p)}>\Lambda T(AT)^{i+1}\right\}\right|.
	\end{align*}
Thus the proof is complete.	
\end{proof}

\section{Proof of Thereom \ref{theo_1.2}}\label{sec-4}
Let $B_0$ be a ball of radius $R_0\leq 2 \mathrm{diam}\left(\Omega\right)$ that contains $\Omega$. Then it is noticed that $\mathrm{diam}\left(\Omega\right)\leq 2R_0$. We also extend $u$ and $\mu$ to be zero in $\mathbb{R}^n\backslash\Omega$. We will show that
	\begin{align}\label{eq4.1}
		\left\|\nabla u\right\|_{L^{s,t}\left(\Omega\right)} \leq C\left\|\mathcal{M}_1\left(\left|\mu\right|\right)^{1/(p-1)}\right\|_{L^{s,t}\left(B_0\right)}
	\end{align}
where $2-p<s<p+\varepsilon$ and $0<t\leq \infty$. Here $\varepsilon>0$ is a small number depending only on $n,
p, \alpha, \beta$, and $c_0$. In what follows we consider only the case $t\neq\infty$ as for $t=\infty$ the
proof is similar. Moreover, to prove \eqref{eq4.1} we may assume that
	\begin{align*}
		\left\|\nabla u\right\|_{L^{\gamma_1}\left(\Omega\right)}\neq 0.
	\end{align*}
Let $r=\min\left\{r_0,\mathrm{diam}\left(\Omega\right)\right\}/10$. For $T>1$ we first claim that there is $\Lambda>0$ such that
	\begin{align}\label{eq4.2}
		\left|\left\{x\in\mathbb{R}^n:\mathcal{M}\left(\chi_\Omega\left|\nabla u\right|^{\gamma_1}\right)(x)^{1/\gamma_1}>\Lambda \right\}\right| < T^{-p\theta_0}r^n\left|B_1\right|.
	\end{align}
Indeed, by weak type (1,1) estimate for the maximal function we have
	\begin{align*}
	\left|\left\{x\in\mathbb{R}^n:\mathcal{M}\left(\chi_\Omega\left|\nabla u\right|^{\gamma_1}\right)(x)^{1/\gamma_1}>\Lambda \right\}\right| < \frac{c(n)}{\Lambda^{\gamma_1}}\int_\Omega\left|\nabla u\right|^{\gamma_1}dx.
	\end{align*}
By choosing $\Lambda$ such that
	\begin{align}\label{eq4.3}
		\frac{c(n)}{\Lambda^{\gamma_1}}\int_\Omega\left|\nabla u\right|^{\gamma_1}dx = T^{-p\theta_0}r^n\left|B_1\right|.
	\end{align}
Let $A$, $\theta_0>1$ be as in Lemma \ref{lem_6.4} and let $0<\gamma_1<\min\left\{\frac{p\theta_0}{p-1},\frac{(p-1)n}{n-1}\right\}$ and $\gamma=p\theta_0/\gamma_1(p-1)-1$. For $0<t<\infty$ we consider the sum
	\begin{align*}
		\mathcal{S}=\sum_{i=1}^{\infty}\left[\left(AT\right)^{si}\left|\left\{x\in B_0:\mathcal{M}\left(\left|\nabla u\right|^{\gamma_1}\right)^{1/\gamma_1}>\Lambda\left(AT\right)^i\right\}\right|\right]^{t/s}
	\end{align*}
It is noticed that 
	\begin{align}\label{eq4.4}
		C^{-1}\mathcal{S}\leq \left\|\mathcal{M}\left(\left|\nabla u/\Lambda\right|^{\gamma_1}\right)^{1/\gamma_1}\right\|_{L^{s,t}\left(B_0\right)}^t \leq C\left(\mathcal{S}+\left|B_0\right|^{t/s}\right)
	\end{align}
By \eqref{eq4.2} and Lemma \ref{lem_6.4} we have 
	\begin{align}\label{eq4.5}
		\mathcal{S}\leq& c(n)\sum_{i=1}^{\infty}\left[\left(AT\right)^{si}T^{-p\theta_0}\left|\left\{x\in B_0:\mathcal{M}\left(\left|\nabla u\right|^{\gamma_1}\right)^{1/\gamma_1}>\Lambda\left(AT\right)^{i-1}\right\}\right|\right]^{t/s}\notag\\
		&+c(n)\sum_{i=1}^{\infty}\left[\left(AT\right)^{si}\left|\left\{x\in B_0:\mathcal{M}_1\left(\left|\mu\right|\right)^{1/(p-1)}>\Lambda T^{-\gamma}\left(AT\right)^{i-1}\right\}\right|\right]^{t/s}\notag\\
		&+\sum_{i=1}^{\infty}\left[\left(AT\right)^{si}\left|\left\{x\in B_0:\mathcal{M}\left(\left|\nabla u\right|^{2-p}\right)^{1/(2-p)}>\Lambda T\left(AT\right)^{i}\right\}\right|\right]^{t/s}\notag\\
		\leq& C\left[\left(AT\right)^{s}T^{-p\theta_0}\right]^{t/s}\left(S+\left|B_0\right|^{t/s}\right) + C\left\|\mathcal{M}_1\left(\left|\mu\right|/\Lambda^{p-1}\right)^{1/(p-1)}\right\|_{L^{s,t}\left(B_0\right)}^t\notag\\
		&+\left\|\left[\mathcal{M}\left(\left|\nabla u\right|/\Lambda T\right)^{2-p}\right]^{1/(2-p)}\right\|_{L^{s,t}\left(B_0\right)}^t
	\end{align}	
Since $A$ and $C$ are fixed, then $T^{1-\frac{p\theta_0}{s}}<<1$ for $T$ large enough if $s<p\theta_0$, that is, $s<p+\varepsilon$ with $\varepsilon=p\left(\theta_0-1\right)$. In this case, the estimate \eqref{eq4.5} gives us
	\begin{align}\label{eq4.6}
	\mathcal{S}
	\leq& C\left|B_0\right|^{t/s}+ C\left\|\mathcal{M}_1\left(\left|\mu\right|/\Lambda^{p-1}\right)^{1/(p-1)}\right\|_{L^{s,t}\left(B_0\right)}^t\notag\\
	&+\left\|\left[\mathcal{M}\left(\left|\nabla u\right|/\Lambda T\right)^{2-p}\right]^{1/(2-p)}\right\|_{L^{s,t}\left(B_0\right)}^t.
	\end{align}
Combining \eqref{eq4.4} and \eqref{eq4.6} and using the boundedness of $\mathcal{M}$ on $L^{s/(2-p),t}\left(\mathbb{R}^n\right)$ where $s/(2-p)> 1$ and $t\geq 0$, we have
	\begin{align*}
		\left\|\nabla u\right\|_{L^{s,t}\left(\Omega\right)} \leq& C\left(\left|B_0\right|^{1/s}\Lambda+ \left\|\mathcal{M}_1\left(\left|\mu\right|\right)^{1/(p-1)}\right\|_{L^{s,t}\left(B_0\right)}+T^{-1}\left\|\nabla u\right\|_{L^{s,t}\left(B_0\right)}\right),
	\end{align*}
and hence for $T$ sufficiently large one has
	\begin{align}\label{eq4.7}
	\left\|\nabla u\right\|_{L^{s,t}\left(\Omega\right)} \leq& C\left(\left|B_0\right|^{1/s}\Lambda+ \left\|\mathcal{M}_1\left(\left|\mu\right|\right)^{1/(p-1)}\right\|_{L^{s,t}\left(B_0\right)}\right).
	\end{align}
We now estimate $\Lambda$. It follows from \eqref{eq4.3} and $\gamma_1<\frac{n(p-1)}{n-1}$ and the standard estimate for equations with measure data (see \cite[Theorem 4.1]{bebo}) that
	\begin{align*}
		\Lambda &\leq Cr^{-n/\gamma_1}\left\|\nabla u\right\|_{L^{\gamma_1}\left(\Omega\right)}\notag\\
		&\leq C\min\left\{r_0,\mathrm{diam}\left(\Omega\right)\right\}^{-n/\gamma_1}\mathrm{diam}\left(\Omega\right)^{n/\gamma_1}\left(\frac{\left|\mu\right|\left(\Omega\right)}{\mathrm{diam}\left(\Omega\right)^{n-1}}\right)^{1/(p-1)}.
	\end{align*}
On the other hand, since $R_0\leq 2\mathrm{diam}\left(\Omega\right)$, we have 
	\begin{align}\label{eq4.8}
		\Lambda\leq C\left(n,p,\mathrm{diam}\left(\Omega\right)/r_0\right)\mathcal{M}_1\left(\left|\mu\right|\right)(x)^{1/(p-1)}
	\end{align}
for any $x\in B_0$. Finally the proof follows from \eqref{eq4.7} and \eqref{eq4.8}.


\end{document}